\newtheorem{theorem}{Theorem}[section]
\newtheorem{lemma}[theorem]{Lemma}
\theoremstyle{definition}
\newtheorem{example}[theorem]{Example}
\theoremstyle{remark}
\newtheorem{remark}[theorem]{Remark}
\numberwithin{equation}{section}
\numberwithin{table}{section}
\def\d{\mbox{\boldmath $d$}}
    \def\f{\mbox{\boldmath $f$}}
    \def\H{\mbox{\boldmath $H$}}
     \def\n{\mbox{\boldmath $n$}}
    \def\u{\mbox{\boldmath $u$}}
    \def\J{\mbox{\boldmath $J$}}
    \def\v{\mbox{\boldmath $v$}}
    \def\w{\mbox{\boldmath $w$}}
    \def\l{\mbox{\boldmath $l$}}
     \def\0{\mbox{\boldmath $0$}}
    \def\u{{\bm u}}
\def\v{{\bm v}}
\def\H{{\bm H}}
\def\J{{\bm J}}
\def\d{{\rm d}}
\def\R{{\bm R}}
\begin{document}

\title{Optimal error estimates of a second-order  projection finite element method for magnetohydrodynamic equations}

\author{Cheng Wang}\address{Mathematics Department, University of Massachusetts, North Dartmouth, MA 02747, USA.  \\
    \email{cwang1@umassd.edu}.
    The first author's research is supported in part by NSF DMS-2012669. 	}

\author{Jilu Wang}\address{Beijing Computational Science Research Center, Beijing 100193, China.
	 \email{jiluwang@csrc.ac.cn}.
    The second author's research is supported in part by NSFC-U1930402 and NSFC-12071020}

\author{Zeyu Xia}\address{School of Mathematics Sciences,	University of Electronic Science and Technology of China,
	Chengdu 611731, China.
    \email{zeyuxia@std.uestc.edu.cn}. The third author's research is supported in part by NSFC-11871139.}

\author{Liwei Xu}\address{School of Mathematics Sciences,	University of Electronic Science and Technology of China,
	Chengdu 611731, China.  Corresponding author: \email{xul@uestc.edu.cn}. The fourth author's research is supported in part by NSFC-11771068 and NSFC-12071060.}

\begin{abstract}
In this paper, we propose and analyze a temporally second-order accurate, fully discrete finite element method for the magnetohydrodynamic (MHD) equations. A modified Crank--Nicolson method is used to discretize the model and appropriate semi-implicit treatments are applied to the fluid convection term and two coupling terms. These semi-implicit approximations result in a linear system with variable coefficients for which the unique solvability can be proved theoretically. In addition, we use a decoupling projection method of the Van Kan type \cite{vankan1986} in the Stokes solver, which computes the intermediate velocity field based on the gradient of the pressure from the previous time level, and enforces the incompressibility constraint via the Helmholtz decomposition of the intermediate velocity field. The energy stability of the scheme is theoretically proved, in which the decoupled Stokes solver needs to be analyzed in details.
Optimal-order convergence of $\mathcal{O} (\tau^2+h^{r+1})$ in the discrete $L^\infty(0,T;L^2)$ norm is proved for the proposed decoupled projection finite element scheme, where $\tau$ and $h$ are the time stepsize and spatial mesh size, respectively, and $r$ is the degree of the finite elements.
Existing error estimates of second-order projection methods of the Van Kan type \cite{vankan1986} were only established in the discrete $L^2(0,T;L^2)$ norm for the Navier--Stokes equations.
Numerical examples are provided to illustrate the theoretical results.
\end{abstract}

\subjclass[Mathematics Subject Classification]{ 35K20, 65M12, 65M60, 	76D05}

\keywords{magnetohydrodynamic equations, modified Crank--Nicolson scheme, finite element, unique solvability, unconditional energy stability, optimal error estimates}
\maketitle

\section*{Introduction}

The magnetohydrodynamic equations have been widely applied into metallurgy and liquid-metal processing, and the numerical solutions are of great significance in practical scientific and engineering applications; see~\cite{app1} and~\cite{app2}. Such an MHD system could be formulated as \cite{Shercliff1965}
\begin{align}
&    \mu\partial_t\H+\sigma^{-1}\nabla\times(\nabla\times\H)
        -\mu\nabla\times(\u\times\H)=\sigma^{-1}\nabla\times {\J},
        \label{PDEa}\\
&
    \partial_t\u+\u\cdot\nabla\u-\nu\Delta\u+\nabla p
        =\f-\mu\H\times(\nabla\times\H),
        \label{PDEb}\\
&
    \nabla\cdot\u=0,\label{PDEc}
\end{align}
over $\Omega\times(0,T]$, where $\Omega$ is a bounded and convex polyhedral domain in $\mathbb{R}^3$ (polygonal domain in $\mathbb{R}^2$). In the above system, $\u$, $\H$ and $p$ denote the velocity field, the magnetic filed, and the pressure, respectively; ${\J}$ and $\f$ are the given source terms (${\J}$ denotes a scalar function in $\mathbb{R}^2$); $\sigma$ denotes the magnetic Reynolds number, $\nu$ the viscosity of the fluid   , and $\mu=M^2\nu\sigma^{-1}$, where $M$ is the Hartman number. The initial data and boundary conditions are given by
\begin{align}
&\H|_{t=0}=\H_0,\quad\u|_{t=0}=\u_0,
\quad\quad
\mbox{in }\Omega,  \label{initial data-1}\\
&
  \H\times\n={\bf{0}}, \quad\ \
    \u={\bf{0}},
\qquad\quad\quad \,
\mbox{on }\partial\Omega\times(0,T] . \label{BC-1}
\end{align}
It is assumed that the initial data satisfies
\begin{equation}
\nabla\cdot \H_0=\nabla\cdot\u_0=0.
  \label{regularity_source}
\end{equation}
By taking the divergence of \eqref{PDEa}, one can easily get $\mu\partial_t\nabla\cdot\H=0$, which together with the above divergence-free initial condition
implies that
\begin{align}\label{div-H=0}
\nabla\cdot\H=0.
\end{align}
The existence and uniqueness of the weak solution for this problem has been theoretically proved in \cite{Max1991,PDE1}. More regularity analysis of the MHD system could be referred in~\cite{PDE7,PDE5,PDE6,PDE4}, etc.

There have been many existing works on the numerical approximations for the incompressible MHD system. In bounded and convex domains, the solutions of the MHD model are generally in $[H^1(\Omega)]^d$ ($d=2,3,$ denotes the dimension of $\Omega$) and therefore people often use $H^1$-conforming finite element methods (FEMs) to solve the MHD equations numerically.  For example, Gunzburger, Meir, and Peterson \cite{Max1991} used $H^1$-conforming FEMs for solving the stationary incompressible MHD equations with an optimal error estimate being established. Later, He developed $H^1$-conforming FEMs in \cite{FEMr3} for solving the time-dependent MHD equations and proved error estimates of the numerical scheme. More works on $H^1$-conforming FEMs for the MHD equations can be found in
\cite{FEMr1,Gerbeau2000A,FEMr2,FEMr4,main}.

While the spatial approximation has always been important, the temporal discretization also plays a significant role for solving the MHD system. There have been quite a few existing stability and convergence analyses for the first-order temporally accurate numerical schemes~\cite{first,FEMr3,FDM1,ratecon,FDM2}.  In most of these works, the stability and convergence analyses have been based on a Stokes solver at each time step, i.e., the computation of the pressure gradient has to be implemented with the incompressibility constraint being enforced, which in turn leads to a non-symmetric linear system, and the computation costs turn out to be extremely expensive. To overcome this difficulty, some ``decoupled" techniques have been introduced. In \cite{shen2016A}, Zhao, Yang, Shen, and Wang dealt with a binary hydrodynamic phase field model of mixtures of nematic liquid crystals and viscous fluids by designing a decoupled semi-discrete scheme, which is linear, first-order accurate in time, and unconditionally energy stable. In particular, a pressure-correction scheme \cite{Guermond2006} was used so that the pressure could be explicitly updated in the velocity equation by introducing an intermediate function and thus two sub-systems are generated. In \cite{shen2015}, Liu, Shen, and Yang proposed a first-order decoupled scheme for a phase-field model of two-phase incompressible flows with variable density based on a ``pressure-stabilized" formulation, which treats the pressure term explicitly in the velocity field equation, and only requires a Poisson solver to update the pressure. These works have mainly focused on the design of energy-preserving schemes without presenting the convergence analysis. Meanwhile, the first-order temporal accuracy may not be sufficient in the practical computations of the MHD system, and therefore higher-order temporal numerical approximations have been highly desired.

In the development of temporally higher-order methods, a conditionally stable second-order backward difference formula (BDF2) algorithm was proposed in \cite{FEM3} for a reduced MHD model at small magnetic Reynolds number, in which the coupling terms were explicitly updated, and other terms were implicitly computed.
An unconditionally stable BDF2 method was proposed in \cite{Heister2016}, where the method was proved convergent with optimal order.
In~\cite{second2}, a second-order scheme with Newton treatment of the nonlinear terms was proposed, where the unconditional stability and optimal error estimates were obtained. Recently, a fully discrete Crank--Nicolson (CN) scheme was studied in
\cite{FEMr5}, where the  unconditional energy stability and convergence (without error estimates) were proved.
For efficient large scale numerical simulations of incompressible flows, high-order projection methods are desired.
In \cite{Shen1996}, Shen presented rigorous error analysis of second-order Crank--Nicolson projection methods of the Van Kan type \cite{vankan1986}, i.e., second-order incremental pressure-correction methods, for the unsteady incompressible Navier--Stokes equations. By interpreting the respective projections schemes as second-order time discretizations of a perturbed system which approximates the Navier--Stokes equations, optimal-order convergence in the discrete $L^2(0,T;L^2)$ norm was proved for the semi-implicit schemes.
Later, Guermond \cite{Guermond1999} proved optimal error estimates in the discrete $L^2(0,T;L^2)$ norm for the fully discrete case with BDF2 approximation in time.
However, whether second-order incremental pressure-correction methods have optimal convergence in the discrete $L^\infty(0,T;L^2)$ norm remains open for both Navier--Stokes and MHD equations.

In this article, we fill in the gap between numerical computation and rigorous error estimates for a second-order projection finite element method for the MHD model. 
We first propose a temporally second-order accurate, fully discrete decoupled finite element method for the MHD system~\eqref{PDEa}-\eqref{PDEc}, and then the following properties are theoretically established: unique solvability, unconditional energy stability, and optimal-rate convergence analysis. In particular, a modified Crank--Nicolson method with an implicit Adams--Moulton interpolation in the form of $\frac34\H_h^{n+1}+\frac14\H_h^{n-1}$, instead of the standard Crank--Nicolson approximation, is applied to discretize the magnetic diffusion term. Such a technique leads to a stronger stability property of the numerical scheme, as will be demonstrated in the subsequent analysis.
A second-order incremental pressure-correction method is used to decouple the computation of velocity and pressure.
Precisely, an intermediate velocity function $\widehat\u_h^{n+1}$ is introduced in the numerical scheme, and its computation is based on the pressure gradient at the previous time step. After solving the intermediate velocity field, we decompose it into the divergence-free subspace by using the Helmholtz decomposition. This yields the velocity field $\u_h^{n+1}$ at the same time level. 
In the error analysis, we first introduce an intermediate projection of the velocity, i.e., $\widehat{\R_h\u^{n+1}}$ as introduced in~\eqref{Rh-hat}-\eqref{Rh-hat2}, with which, estimate of an intermediate error for the velocity is obtained. With such estimate and rigorous analysis of the discrete gradient of the Stokes projection, optimal convergence $\mathcal{O}(\tau^2+h^{r+1})$ is proved for the velocity and magnetic fields in the discrete $L^\infty (0, T; L^2)$ norm, where $r$ is the degree of the finite elements, $\tau$ and $h$ are the time stepsize and space mesh size, respectively.
To our knowledge, this is the first rigorous analysis of optimal-order convergence in the discrete $L^\infty (0, T; L^2)$ norm for second-order incremental pressure-correction methods.
The techniques introduced in this paper would also work for other related projection methods.


This paper is organized as follows. In Section~\ref{sec:formulation}, a variational formulation and some preliminary results are reviewed. The fully discrete finite element scheme is introduced in Section~\ref{sec:numerical scheme}, and its unconditional energy stability is established in details. Section \ref{sec-error} provides the rigorous proof of the unique solvability and optimal error estimates. Several numerical examples are presented in Section~\ref{sec:numerical results}. Finally, some concluding remarks are provided in Section~\ref{sec:conclusion}.

\section{Variational formulation and stability analysis} \label{sec:formulation}
\setcounter{equation}{0}
For $k\ge 0 $ and $1\le p\le \infty$, let $W^{k,p}(\Omega)$ be the conventional Sobolev space of functions defined on $\Omega$, with abbreviations $L^{p}(\Omega)=W^{0,p}(\Omega)$ and $H^{k}(\Omega)=W^{k,2}(\Omega)$. Then, we denote by $W^{1,p}_0(\Omega)$ the space of functions in $W^{1,p}(\Omega)$ with zero traces on the boundary $\partial\Omega$, and denote $H^1_0(\Omega)=W^{1,2}_0(\Omega)$. The corresponding vector-valued spaces are
\begin{align*}
&{\bf L}^p(\Omega)=[L^p(\Omega)]^d ,
\qquad\qquad\qquad {\bf W}^{k,p}(\Omega)=[W^{k,p}(\Omega)]^d , \\
&{\bf W}^{1,p}_0(\Omega)=[W^{1,p}_0(\Omega)]^d ,
\qquad\quad\ \ \ {\bf H}^1_0(\Omega)={\bf W}^{1,2}_0(\Omega) , \\
&\ring{\bf H}^1(\Omega)=\{\v\in{\bf H}^1(\Omega):\v\times\n|_{\partial\Omega}=0 \},
\end{align*}
where $d=2,3,$ denotes the dimension of $\Omega$.
As usual, the inner product of $L^2(\Omega)$ is denoted by $(\cdot,\cdot)$.

With the above notations, it could be seen that the exact solution $(\H,\u,p)$ of \eqref{PDEa}-\eqref{PDEc} satisfies
\begin{align}
&    (\mu\partial_t\H,\w)+(\sigma^{-1}\nabla\times\H,\nabla\times\w)
        -(\mu\u\times\H,\nabla\times\w)=(\sigma^{-1}\nabla\times \J,\w), \label{weaksolution-1}\\
&   (\partial_t\u,\v)+(\nu\nabla\u,\nabla\v)+b(\u,\u,\v) -(p,\nabla\cdot\v)
= (\f,\v)-(\mu\H\times(\nabla\times\H),\v), \label{weaksolution-2}\\
&(\nabla\cdot\u,q)=0,
    \label{weaksolution-3}
\end{align}
for any test functions $(\w,\v,q)\in( \ring{\bf H}^1(\Omega), {\bf H}^1_0(\Omega), L^2(\Omega))$, where we have defined the trilinear form $b(\cdot,\cdot,\cdot)$ as
\begin{equation}
\begin{aligned}
b(\u,\v,\w)&:=(\u\cdot\nabla\v,\w)+\frac12((\nabla\cdot\u)\v,\w)  \\
&=\frac12\big[(\u\cdot\nabla\v,\w) - (\u\cdot\nabla\w,\v)\big],
\quad \forall\u,\v,\w\in{\bf H}_0^1(\Omega) ,
\end{aligned}
\label{def-b}
\end{equation}
and $\u\cdot\v$ denotes the Euclidean scalar product in $\mathbb{R}^d$. Notice that the trilinear form $b(\cdot,\cdot,\cdot)$ is skew-symmetric with respect to its last two arguments,  so that we further have
\begin{align*}
b(\u,\v,\v)=0,
\quad
\forall\u, \v,\w\in{\bf H}_0^1(\Omega).
\end{align*}

The energy stability of the continuous system \eqref{weaksolution-1}-\eqref{weaksolution-3} could be obtained in a straightforward manner.
By taking $\w=\H,\v=\u$ in \eqref{weaksolution-1}-\eqref{weaksolution-3} and adding the resulting equations together, we get
\begin{equation*}
 \begin{aligned}
    &
    \frac{\mu}{2}\frac{\d}{\d t}\|\H\|_{L^2}^2 + \sigma^{-1}\|\nabla\times\H\|_{L^2}^2 + \frac{1}{2}\frac{\d}{\d t}\|\u\|_{L^2}^2 +
    \nu\|\nabla\u\|_{L^2}^2 \\
    &= (\J,\sigma^{-1}\nabla\times\H)+(\f,\u) \\
    &\le \frac{1}{4\sigma}\|\J\|_{L^2}^2+\sigma^{-1}\|\nabla\times\H\|_{L^2}^2
        +\frac{1}{4\varepsilon}\|\f\|_{L^2}^2+\varepsilon\|\u\|_{L^2}^2 ,
 \end{aligned}
\end{equation*}
where $\varepsilon$ is an arbitrary constant. Due to the zero boundary condition of $\u$ in \eqref{BC-1}, we have $\|\u\|^2\leq C\|\nabla\u\|^2$. Since $\varepsilon$ can be arbitrarily small, we obtain the following energy estimate
\begin{equation}
    \frac{\mu}{2}\frac{\d}{\d t}\|\H\|_{L^2}^2  + \frac{1}{2}\frac{\d}{\d t}\|\u\|_{L^2}^2 \\
    \leq \frac{1}{4\sigma}\|\J\|_{L^2}^2+\frac1{4\varepsilon}\|\f\|_{L^2}^2.
\end{equation}
If the sources terms $\J={\bm f}={\bm 0}$, we further get
\begin{align}
 \frac{\mu}{2}\frac{\d}{\d t}\|\H\|_{L^2}^2  + \frac{1}{2}\frac{\d}{\d t}\|\u\|_{L^2}^2
 \leq0,
\end{align}
which implies the total energy is decaying.

\section{Numerical method and theoretical results} \label{sec:numerical scheme}

\subsection{Numerical method}
In this subsection, we propose a fully discrete decoupled finite element method for solving the system \eqref{PDEa}-\eqref{PDEc}. Let $\Im_h$ denote a quasi-uniform partition of $\Omega$ into tetrahedrons $K_j$ in $\mathbb{R}^3$ (or triangles in $\mathbb{R}^2$), $j=1,2,\dots, M$, with mesh size $h=\max_{1\le j\le 	M}\{\mbox{diam}K_j\}$.
To approximate $\u$ and $p$ in the system \eqref{PDEa}-\eqref{PDEc}, we introduce the Taylor-Hood finite element space ${\bf X_h}\times M_h$, defined by
\begin{align*}
      &{\bf X}_h=\{\l_h\in{\bf H}_0^1(\Omega):\l_h|_{K_j}\in{\bf P}_{r}(K_j)\}, \\
    &M_h=\{q_h\in L^2(\Omega):q_h|_{K_j}\in P_{r-1}(K_j),\ \mbox{$\int_\Omega q_h \d x=0$}\},
\end{align*}
for any integer $r\ge 2$, where $P_r(K_j)$ is the space of polynomials with degree $r$ on $K_j$ for all $K_j\in\Im_h$ and ${\bf P}_{r}(K_j):=[P_r(K_j)]^d$. To approximate the magnetic field $\H$, we introduce the finite element space ${\bf S}_h$ defined by
\begin{align*}
&{\bf S}_h=\{\w_h\in \ring{\bf H}^1(\Omega):\w_h|_{K_j}
        \in {\bf P}_{r}(K_j)\} .
\end{align*}

Let $\{t_n=n\tau\}_{n=0}^N$ denote a uniform partition of the time interval $[0,T]$,
with a step size $\tau=T/N$, and $v^n=v(x,t_n)$.
For any sequences $\{v^n\}_{n=0}^N$ and $\{\widehat v^n\}_{n=0}^N$, we define
\begin{align*}
\widecheck v^{n+\frac12}:=\frac34 v^{n+1}+\frac14 v^{n-1}, \quad
\overline v^{n+\frac12}:=\frac12 \widehat v^{n+1}+\frac12 v^{n}, \quad
\widetilde v^{n+\frac12}:=\frac32 v^{n}-\frac12 v^{n-1}.
\end{align*}
Then, a  fully discrete second-order decoupled FEM for the incompressible MHD equations \eqref{PDEa}-\eqref{PDEc} is formulated as: find
$(\H_h^{n+1},\u_h^{n+1},\widehat{\u}_h^{n+1},p_h^{n+1})\in ({\bf S}_h,{\bf X}_h,{\bf X}_h,M_h)$ such that
\begin{align}
&
\mu\bigg(\frac{\H^{n+1}_h-\H^n_h}{\tau},\w_h\bigg)
+\sigma^{-1}\Big(\nabla\times\widecheck\H_h^{n+\frac12},\nabla\times\w_h \Big)
+\sigma^{-1}\Big(\nabla\cdot\widecheck\H_h^{n+\frac12},\nabla\cdot\w_h \Big) \nonumber\\
&\hspace{1.7in}
-\mu\Big(\overline\u_h^{n+\frac12}\times\widetilde\H_h^{n+\frac12} ,\nabla\times\w_h\Big)
=\sigma^{-1}\Big(\nabla\times \J^{n+\frac12},\w_h\Big) ,  \label{scheme-a} \\
&
\bigg(\frac{\widehat{\u}_h^{n+1}-\u_h^n}{\tau},\v_h\bigg)
+\nu\Big(\nabla\overline\u_h^{n+\frac12},\nabla\v_h\Big)
+b\Big(\widetilde\u_h^{n+\frac12}, \overline\u_h^{n+\frac12},\v_h \Big)
-\Big(p_h^n,\nabla\cdot\v_h\Big) \nonumber\\
&\hspace{2.1in}
+\mu\Big(\widetilde\H_h^{n+\frac12}\times(\nabla\times\widecheck\H_h^{n+\frac12}),\v_h\Big)
=\Big(\f^{n+\frac12},\v_h\Big), \label{scheme-b} \\
&\bigg(\frac{{\u}_h^{n+1}-\widehat{\u}_h^{n+1}}{\tau},\l_h\bigg)-\frac12\Big(p_h^{n+1}-p_h^n,\nabla\cdot\l_h\Big)=0,  \label{scheme-c} \\
& \Big(\nabla\cdot\u_h^{n+1},q_h\Big)=0,   \label{scheme-d}
\end{align}
for any $(\w_h,\v_h,\l_h,q_h) \in ({\bf S}_h,{\bf X}_h,{\bf X}_h,M_h)$ and $n=1,2,\dots,N-1$.
Here we have added a stabilization term $\sigma^{-1}(\nabla\cdot\widecheck\H_h^{n+\frac12},\nabla\cdot\w_h)$ to \eqref{scheme-a}. This is consistent with \eqref{weaksolution-1} in view of \eqref{div-H=0}.

In this paper, it is assumed that the system \eqref{PDEa}-\eqref{PDEc} admits a unique solution
satisfying
\begin{align}
&\|\H_{ttt}\|_{L^\infty(0,T;L^2)}
+\|\H_{tt}\|_{L^\infty(0,T;H^1)}
+\|\H_t\|_{L^\infty(0,T;H^{r+1})}
+\|\u_{ttt}\|_{L^\infty(0,T;L^2)}
\nonumber\\
&\quad
+\|\u_{tt}\|_{L^\infty(0,T;H^1)}
+\|\u_t\|_{L^\infty(0,T;H^{r+1})}
+\|p_{tt}\|_{L^\infty(0,T;L^2)}
+\|p_{t}\|_{L^\infty(0,T;H^r)}
\le
K .
\label{reg-asp}
\end{align}
Here, the subscripts of $\H,\u,p$ denote the partial derivative to variable $t$.

Next, we present our main results, i.e., optimal error estimates for scheme \eqref{scheme-a}-\eqref{scheme-d}, in the following theorem.

\begin{theorem}\label{thm-error}
Suppose that the system \eqref{PDEa}-\eqref{PDEc} has a unique solution $(\H,\u,p)$ satisfying \eqref{reg-asp}. Then there exist positive constants $\tau_0$ and $h_0$ such that when $\tau<\tau_0$, $h<h_0$, and $\tau=\mathcal{O}(h)$, the fully discrete decoupled FEM system \eqref{scheme-a}-\eqref{scheme-d} admits a unique solution $(\H_h^n,\u_h^n,p_h^n)$, $n=2,3,\dots,N$, which satisfies that
\begin{align}
&\max_{2\le n\le N}\Big(\|\H_h^n-\H^n\|_{L^2}+\|\u_h^n-\u^n\|_{L^2} \Big)
\le
C_0(\tau^2+h^{r+1}), \label{est-error}\\
&
\bigg( \tau\sum_{n=2}^{N} \big(\|\nabla\times(\H_h^{n}-\H^{n})\|_{L^2}^2+  \|\nabla(\overline\u_h^{n-\frac12}-\overline\u^{n-\frac12})\|_{L^2}^2 \big)\bigg)^\frac12
\le
C_0(\tau^2+h^{r}),
\label{est-error2}
\end{align}
where $C_0$ is a positive constant independent of $\tau$ and $h$.
\end{theorem}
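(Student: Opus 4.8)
The plan is to run an energy argument for the modified Crank--Nicolson discretization in tandem with a careful treatment of the incremental pressure-correction step, and to close it by a bootstrapping induction together with the discrete Gronwall inequality. First I would split every error through suitable projections: $\H^n-\H_h^n=\eta_\H^n+e_\H^n$ with $\eta_\H^n=\H^n-\R_h\H^n$ and $e_\H^n=\R_h\H^n-\H_h^n$, where $\R_h$ is the elliptic (curl-curl plus div-div) projection onto ${\bf S}_h$; $\u^n-\u_h^n=\eta_\u^n+e_\u^n$ and $p^n-p_h^n=\eta_p^n+e_p^n$, where $(\R_h\u,\R_h p)$ is the Stokes projection onto ${\bf X}_h\times M_h$; and the intermediate error $\widehat e_\u^{n+1}=\widehat{\R_h\u^{n+1}}-\widehat\u_h^{n+1}$ built from the intermediate projection of~\eqref{Rh-hat}-\eqref{Rh-hat2}. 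The projection parts $\eta$ are $\mathcal O(h^{r+1})$ in $L^2$ and $\mathcal O(h^{r})$ in $H^1$ by standard approximation theory, so all the work lies in bounding the finite-element errors $e$. Unique solvability comes along for free: since~\eqref{scheme-a}-\eqref{scheme-d} is a square linear system at each step, injectivity (hence solvability) follows from the same coercivity that underlies the energy estimate below.

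Next I would form the error equations by subtracting~\eqref{scheme-a}-\eqref{scheme-d} from~\eqref{weaksolution-1}-\eqref{weaksolution-3} evaluated at $t_{n+\frac12}$. The right-hand sides collect two kinds of residuals: temporal consistency errors and spatial projection residuals. Taylor expansion at $t_{n+\frac12}$ shows $\frac34\H^{n+1}+\frac14\H^{n-1}=\H^{n+\frac12}+\mathcal O(\tau^2)$ and $\frac32\H^n-\frac12\H^{n-1}=\H^{n+\frac12}+\mathcal O(\tau^2)$, so, under~\eqref{reg-asp}, every truncation contribution is $\mathcal O(\tau^2)$ in the relevant norm. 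The correction step~\eqref{scheme-c} encodes the discrete Helmholtz decomposition $\widehat e_\u^{n+1}=e_\u^{n+1}+\frac\tau2 G_h(e_p^{n+1}-e_p^n)$, where $G_h$ denotes the discrete gradient defined by $(G_h\phi,\v_h)=-(\phi,\nabla\cdot\v_h)$. Because $\u_h^{n+1}$ and $\R_h\u^{n+1}$ are both discretely divergence free, $e_\u^{n+1}\perp G_h(e_p^{n+1}-e_p^n)$, which yields the Pythagorean identity $\|\widehat e_\u^{n+1}\|_{L^2}^2=\|e_\u^{n+1}\|_{L^2}^2+\frac{\tau^2}{4}\|G_h(e_p^{n+1}-e_p^n)\|_{L^2}^2$ that I would exploit repeatedly.

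The central energy estimate tests the magnetic error equation with a combination matched to the $\frac34,\frac14$ weighting: writing $\frac34 e_\H^{n+1}+\frac14 e_\H^{n-1}=\frac12(e_\H^{n+1}+e_\H^n)+\frac14(e_\H^{n+1}-2e_\H^n+e_\H^{n-1})$ exhibits the usual Crank--Nicolson average plus a discrete second difference whose coercivity supplies the extra numerical dissipation---this is precisely where the modified scheme gains its stronger stability---so that the diffusion and time-difference terms combine into a telescoping coercive energy. For the velocity I would test~\eqref{scheme-b} at the error level with $\frac12(\widehat e_\u^{n+1}+e_\u^n)$, whose time-difference term gives $\frac1{2\tau}(\|\widehat e_\u^{n+1}\|_{L^2}^2-\|e_\u^n\|_{L^2}^2)$; invoking the Pythagorean identity to replace $\|\widehat e_\u^{n+1}\|_{L^2}^2$ by the smaller $\|e_\u^{n+1}\|_{L^2}^2$ makes this telescope directly into a uniform-in-time bound for $\|e_\u^n\|_{L^2}$, while the diffusion produces the companion dissipation appearing in~\eqref{est-error2}. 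The trilinear form $b(\cdot,\cdot,\cdot)$ and the two coupling terms $\mu(\u\times\H)$ are then controlled after setting up a mathematical induction whose hypothesis furnishes $L^\infty$- and $W^{1,\infty}$-type bounds on the discrete solution; combined with inverse estimates and the constraint $\tau=\mathcal O(h)$, these absorb the nonlinear contributions into the dissipation plus a Gronwall-summable remainder. Summing in $n$, applying the discrete Gronwall inequality, re-deriving the induction hypothesis via inverse inequalities, and finally using the triangle inequality with the $\eta$-bounds then yields~\eqref{est-error}-\eqref{est-error2}.

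The main obstacle, and the genuinely new point, is upgrading the velocity bound from the discrete $L^2(0,T;L^2)$ norm---all that the classical analyses of Shen and Guermond deliver---to the discrete $L^\infty(0,T;L^2)$ norm. The difficulty is the explicit pressure term $-(e_p^n,\nabla\cdot\,\cdot)$ carried by~\eqref{scheme-b}: when tested against $\frac12(\widehat e_\u^{n+1}+e_\u^n)$, the divergence-free parts drop but, through $\widehat e_\u^{n+1}=e_\u^{n+1}+\frac\tau2 G_h(e_p^{n+1}-e_p^n)$, there remains a term proportional to $\tau\,(G_h e_p^n,G_h(e_p^{n+1}-e_p^n))$; estimated head-on this is only summable in time, which caps the argument at an $L^2$-in-time bound. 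My strategy is to rewrite it via the polarization identity as a telescoping discrete pressure-gradient energy of size $\sim\tau\|G_h e_p^n\|_{L^2}^2$ and to prove a sharp bound on the discrete gradient $G_h(\R_h p^{n+1}-\R_h p^n)$ of the Stokes-projected pressure increment, which is $\mathcal O(\tau)$ by~\eqref{reg-asp}. Routing the estimate through the intermediate error $\widehat e_\u$ (so that the offending gradient term enters with a favorable sign via the Pythagorean identity) and telescoping the pressure-increment contributions against this sharp discrete-gradient estimate is what promotes the final velocity error to a uniform-in-time bound and delivers the optimal order $\mathcal O(\tau^2+h^{r+1})$.
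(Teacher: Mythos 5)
Your proposal is correct and takes essentially the same approach as the paper's proof: the same Maxwell/Stokes/intermediate projections and error splitting, the same test functions $\widecheck e_{\H}^{n+\frac12}$ and $\tfrac12(\widehat e_\u^{n+1}+e_\u^n)$, the same orthogonality (Pythagorean) identity from the correction step, the same polarization of $\tau(\nabla_h e_p^n,\nabla_h(e_p^{n+1}-e_p^n))$ into a telescoping discrete pressure-gradient energy, the same sharp bounds \eqref{lem-est0}-\eqref{lem-est1} on the discrete gradient of the projected pressure, and the same induction-plus-Gronwall closure under $\tau=\mathcal{O}(h)$. The only minor difference is that the paper cancels the leading MHD coupling terms exactly between the magnetic and velocity estimates (rather than absorbing them via induction-furnished uniform bounds), which is why it only needs a $W^{1,3}$ bound on $\H_h^m$ from the induction hypothesis; your stated ``$W^{1,\infty}$-type'' bound would not follow from the $\tau^{7/4}+h^{7/4}$ hypothesis in three dimensions, but the $L^\infty$ bound your absorption argument actually uses does.
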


\vspace{.1in}
\begin{remark}
\rm
One feature of the proposed numerical scheme~\eqref{scheme-a}-\eqref{scheme-d} is associated with its decoupled nature in the Stokes solver.
Motivated by the second-order projection method of the Van Kan type \cite{vankan1986}, i.e., second-order incremental pressure-correction method,
we introduce an intermediate velocity $\widehat{\u}_h^{n+1}$ to decouple the problem, and thus build two systems and both of them consist of two unknowns. More precisely, we first obtain $\H^{n+1}_h$ and $\widehat{\u}^{n+1}_h$ through \eqref{scheme-a}-\eqref{scheme-b}, while treating the gradient of pressure explicitly. Then, we substitute $\widehat{\u}_h^{n+1}$ into~\eqref{scheme-c}-\eqref{scheme-d}, so that $p_h^{n+1}$ and $\u_h^{n+1}$ could be efficiently computed via solving a Darcy problem. In comparison with the classical coupled solver that the full system contains three unknowns $\H_h^{n+1}$, $\u_h^{n+1}$ and $p_h^{n+1}$, which have to be solved simultaneously, such a decoupled approach will greatly improve the efficiency of the numerical scheme.

There have been extensive analyses of decoupled numerical schemes for incompressible Navier--Stokes equations; see the pioneering works of A.~Chorin~\cite{Chorin1968}, R.~Temam~\cite{Temam1969}, and many other related studies~\cite{BCG1989,E1995,E2002,Kim1985,STWW2003,Wang2000}, etc.
In all the existing articles, error estimates of second-order projection methods of the Van Kan type \cite{vankan1986} were only established in the discrete $L^2 (0, T; L^2)$ norm for the Navier--Stokes equations \cite{Guermond1999,Guermond2006,Shen1996}.
To our knowledge, Theorem \ref{thm-error} of this paper is the first optimal error estimate in the discrete $L^\infty (0, T; L^2)$ norm for second-order projection methods of the Van Kan type \cite{vankan1986}.
The techniques introduced in this paper would also work for other related projection methods.


\end{remark}

\vspace{.05in}
\begin{remark}
\rm
Another feature of scheme \eqref{scheme-a}-\eqref{scheme-d} is that we have used a modified Crank--Nicolson method for temporal discretization, where the term $\nabla\times \H^{n+\frac12}$ is approximated by $\nabla\times(\frac34\H^{n+1}+\frac14\H^{n-1})$.
This enables us to obtain error estimates for the term $\nabla\times\H$ at certain time steps, instead of an average of those at two consecutive time levels; see \eqref{est-error2}.
Such a modified Crank--Nicolson scheme has been extensively applied to various gradient flow models~\cite{chen14,cheng16a,diegel16,guo16}. An application of this approach to the incompressible MHD system is reported in this work, for the first time.
\end{remark}

\vspace{.05in}
\begin{remark}
\rm
In \eqref{scheme-a}, we have added a stabilization term $\sigma^{-1}(\nabla\cdot\widecheck\H_h^{n+\frac12},\nabla\cdot\w_h)$ to validate the coercivity of the magnetic equation, with which, optimal error estimates for the magnetic field in energy-norm can be proved.
\end{remark}

\vspace{.05in}
\begin{remark} \label{H1u1}
\rm
It is noted that the numerical solutions at two previous time levels are needed for the implementation of \eqref{scheme-a}-\eqref{scheme-d}.
The starting values at time steps $t_0$ and $t_1$ are assumed to be given and satisfy the estimates \eqref{est-error}-\eqref{est-error2}.

\end{remark}

\vspace{.05in}
In the following subsection,
we analyze the energy stability of scheme \eqref{scheme-a}-\eqref{scheme-d}.
In this paper, we denote by $C$ a generic positive constant and by $\varepsilon$ a generic small positive constant, which are independent of $n$, $h$, $\tau$, and $C_0$.

\subsection{Stability analysis of numerical scheme}

In this subsection, we present the energy stability analysis for the numerical system \eqref{scheme-a}-\eqref{scheme-d}.
Here, we introduce a discrete version of the gradient operator, $\nabla_h: M_h\rightarrow {\bf X}_h$, defined as
\begin{align}
(\v_h,\nabla_h q_h)=-(\nabla\cdot \v_h,q_h), \label{def-nablah}
\quad
\forall \v_h\in{\bf X}_h, q_h\in M_h.
\end{align}
Through the definition of the discrete gradient operator $\nabla_h$, we can rewrite the equation \eqref{scheme-c} in the following equivalent form:
\begin{align}
&\frac{\u_h^{n+1}-\widehat\u_h^{n+1}}{\tau}
+\frac12\nabla_h(p_h^{n+1}-p_h^n)=0 . \label{scheme-c2}
\end{align}
The abstract form \eqref{scheme-c2} will be useful in the stability analysis of numerical scheme.

\begin{theorem}\label{stability}
The numerical solution $(\H_h^n,\u_h^n,p_h^n)$ to the fully discrete linearized FEM \eqref{scheme-a}-\eqref{scheme-d} satisfies the following energy stability estimate
  \begin{equation}\label{est-stability}
  \begin{aligned}
    \frac\mu{2\tau}\Big(\|\H_h^{n+1}\|_{L^2}^2-\|\H_h^n\|_{L^2}^2\Big)
    +\frac\mu{8\tau}\Big(\|\H_h^{n+1}-\H_h^n\|_{L^2}^2
    -\|\H_h^{n}-\H_h^{n-1}\|_{L^2}^2\Big)  \\
    +\frac1{2\tau}\Big(\|\u_h^{n+1}\|_{L^2}^2-\|\u_h^n\|_{L^2}^2\Big)
    +\frac{\tau}8\Big(\|\nabla_h p_h^{n+1}\|_{L^2}^2-\|\nabla_h p_h^n\|_{L^2}^2\Big) \\
    \leq C\Big(\|\J^{n+\frac12}\|_{L^2}^2+ \|\f^{n+\frac12}\|_{L^2}^2 \Big),
  \end{aligned}
\end{equation}
for $n=1,2,\dots,N-1$, where $C$ is a positive constant independent of $\tau$ and $h$.

\end{theorem}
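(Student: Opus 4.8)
The plan is to test \eqref{scheme-a} and \eqref{scheme-b} against the very combinations of the unknowns that enter the modified Crank--Nicolson discretization, add the resulting identities so that the two coupling terms cancel, and then use the projection identity \eqref{scheme-c2} together with the discrete incompressibility \eqref{scheme-d} to generate the telescoping pressure-gradient contribution.

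First I take $\w_h=\widecheck\H_h^{n+\frac12}$ in \eqref{scheme-a}. The diffusion and stabilization terms give the nonnegative quantities $\sigma^{-1}\|\nabla\times\widecheck\H_h^{n+\frac12}\|_{L^2}^2$ and $\sigma^{-1}\|\nabla\cdot\widecheck\H_h^{n+\frac12}\|_{L^2}^2$, while the discrete time derivative is handled by the algebraic identity
\[
\Big(a-b,\tfrac34 a+\tfrac14 c\Big)
=\tfrac12\big(\|a\|_{L^2}^2-\|b\|_{L^2}^2\big)
+\tfrac18\big(\|a-b\|_{L^2}^2-\|b-c\|_{L^2}^2\big)
+\tfrac18\|a-2b+c\|_{L^2}^2,
\]
applied with $a=\H_h^{n+1}$, $b=\H_h^n$, $c=\H_h^{n-1}$; this produces exactly the two magnetic differences in \eqref{est-stability} plus a nonnegative remainder that is discarded. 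Next I take $\v_h=\overline\u_h^{n+\frac12}$ in \eqref{scheme-b}: the trilinear term vanishes since $b(\widetilde\u_h^{n+\frac12},\overline\u_h^{n+\frac12},\overline\u_h^{n+\frac12})=0$ by the skew-symmetry \eqref{def-b}, the viscous term yields $\nu\|\nabla\overline\u_h^{n+\frac12}\|_{L^2}^2$, and the time-difference term yields $\frac1{2\tau}(\|\widehat\u_h^{n+1}\|_{L^2}^2-\|\u_h^n\|_{L^2}^2)$. Upon adding the two tested equations, the coupling terms cancel: by the scalar triple product identity $(\overline\u_h^{n+\frac12}\times\widetilde\H_h^{n+\frac12},\nabla\times\widecheck\H_h^{n+\frac12})=(\widetilde\H_h^{n+\frac12}\times(\nabla\times\widecheck\H_h^{n+\frac12}),\overline\u_h^{n+\frac12})$, so that $-\mu(\cdots)$ from \eqref{scheme-a} annihilates $+\mu(\cdots)$ from \eqref{scheme-b}.

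The main obstacle is to convert $\frac1{2\tau}\|\widehat\u_h^{n+1}\|_{L^2}^2$ and the explicit pressure term $-(p_h^n,\nabla\cdot\overline\u_h^{n+\frac12})$ into the velocity and pressure-gradient differences of \eqref{est-stability}. Both $\u_h^{n+1}$ and $\u_h^n$ are discretely divergence free, so $(\u_h^{n+1},\nabla_h q_h)=(\u_h^n,\nabla_h q_h)=0$ for all $q_h\in M_h$. Squaring the $L^2$ norm of $\widehat\u_h^{n+1}=\u_h^{n+1}+\frac\tau2\nabla_h(p_h^{n+1}-p_h^n)$ from \eqref{scheme-c2} and using this orthogonality yields the Pythagorean relation $\|\widehat\u_h^{n+1}\|_{L^2}^2=\|\u_h^{n+1}\|_{L^2}^2+\frac{\tau^2}{4}\|\nabla_h(p_h^{n+1}-p_h^n)\|_{L^2}^2$. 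Writing $\overline\u_h^{n+\frac12}=\frac12(\widehat\u_h^{n+1}+\u_h^n)$, invoking \eqref{def-nablah}, the divergence-free property of $\u_h^n$, and again $\widehat\u_h^{n+1}=\u_h^{n+1}+\frac\tau2\nabla_h(p_h^{n+1}-p_h^n)$, I obtain
\[
-(p_h^n,\nabla\cdot\overline\u_h^{n+\frac12})
=\tfrac\tau8\big(\|\nabla_h p_h^{n+1}\|_{L^2}^2-\|\nabla_h p_h^n\|_{L^2}^2\big)
-\tfrac\tau8\|\nabla_h(p_h^{n+1}-p_h^n)\|_{L^2}^2 .
\]
Adding this to $\frac1{2\tau}\|\widehat\u_h^{n+1}\|_{L^2}^2$, the two copies of $\frac\tau8\|\nabla_h(p_h^{n+1}-p_h^n)\|_{L^2}^2$ cancel, leaving precisely $\frac1{2\tau}\|\u_h^{n+1}\|_{L^2}^2+\frac\tau8\|\nabla_h p_h^{n+1}\|_{L^2}^2$; this cancellation is the crux of the incremental pressure-correction analysis.

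It remains to bound the right-hand sides. Integration by parts together with $\widecheck\H_h^{n+\frac12}\times\n=0$ on $\partial\Omega$ gives $\sigma^{-1}(\nabla\times\J^{n+\frac12},\widecheck\H_h^{n+\frac12})=\sigma^{-1}(\J^{n+\frac12},\nabla\times\widecheck\H_h^{n+\frac12})$, which by Cauchy--Schwarz and Young's inequality is dominated by $C\|\J^{n+\frac12}\|_{L^2}^2$ plus a small multiple of $\sigma^{-1}\|\nabla\times\widecheck\H_h^{n+\frac12}\|_{L^2}^2$; likewise $(\f^{n+\frac12},\overline\u_h^{n+\frac12})$ is dominated by $C\|\f^{n+\frac12}\|_{L^2}^2$ plus a small multiple of $\|\nabla\overline\u_h^{n+\frac12}\|_{L^2}^2$ after Poincar\'e's inequality. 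Absorbing the small multiples into the coercive diffusion and viscous terms on the left and discarding the remaining nonnegative quantities yields \eqref{est-stability}.
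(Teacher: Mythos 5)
Your proposal is correct and follows essentially the same route as the paper's proof: the same test functions $\widecheck\H_h^{n+\frac12}$ and $\overline\u_h^{n+\frac12}$ in \eqref{scheme-a}-\eqref{scheme-b}, the same algebraic identity for the modified Crank--Nicolson difference, the same triple-product cancellation of the two coupling terms, and the same absorption of the source terms into the coercive diffusion and viscous terms. Your handling of the projection step --- using \eqref{scheme-c2} and the discrete divergence-free conditions to obtain the Pythagorean relation for $\widehat\u_h^{n+1}$ and the telescoping pressure-gradient term --- is algebraically equivalent to the paper's choices of test functions $\l_h=\u_h^{n+1}$ and $\l_h=\nabla_h p_h^n$ in \eqref{scheme-c} combined with the identity \eqref{sta-nablap}.
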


\begin{proof}
By taking $\w_h=\widecheck\H_h^{n+\frac12}$ in \eqref{scheme-a} and
 $\v_h=\overline\u_h^{n+\frac12}$ in \eqref{scheme-b}, we get
\begin{align}
&\frac\mu{2\tau}\Big(\|\H_h^{n+1}\|_{L^2}^2-\|\H_h^n\|_{L^2}^2\Big)+\frac\mu{8\tau}\Big(\|\H_h^{n+1}-\H_h^n\|_{L^2}^2-
\|\H_h^n-\H_h^{n-1}\|_{L^2}^2\Big)   \nonumber\\
&\quad
+\frac\mu{8\tau}\|\H_h^{n+1}-2\H_h^n+\H_h^{n-1}\|_{L^2}^2
+\sigma^{-1}\|\nabla\times\widecheck\H_h^{n+\frac12}\|_{L^2}^2
+\sigma^{-1}\|\nabla\cdot\widecheck\H_h^{n+\frac12}\|_{L^2}^2 \nonumber\\
&\quad
-\mu\Big(\overline\u_h^{n+\frac12}\times\widetilde\H_h^{n+\frac12},\nabla\times\widecheck\H_h^{n+\frac12}\Big)   \nonumber\\
&=\sigma^{-1}\Big(\nabla\times \J^{n+\frac12},\widecheck\H_h^{n+\frac12} \Big)  ,
  \label{sta-H}
\end{align}
and
\begin{align}
&
\frac1{2\tau} \Big(\|\widehat\u_h^{n+1}\|_{L^2}^2-\|\u_h^n\|_{L^2}^2\Big)
+\nu\|\nabla\overline\u_h^{n+\frac12}\|_{L^2}^2
- \Big(p_h^n,\nabla\cdot\overline\u_h^{n+\frac12}\Big) \nonumber\\
&\quad
+\mu\Big(\widetilde\H_h^{n+\frac12}\times(\nabla\times\widecheck\H_h^{n+\frac12}),\overline\u_h^{n+\frac12} \Big)  \nonumber\\
     &=\Big(\f^{n+\frac12},\overline\u_h^{n+\frac12}\Big),
\end{align}
respectively,
where we have used the fact that $b(\widetilde\u_h^{n+\frac12},\overline\u_h^{n+\frac12},\overline\u_h^{n+\frac12})=0$, and
\begin{align*}
&\bigg(\frac{\H_h^{n+1}-\H_h^n}{\tau},\w_h \bigg) \\
&=
\frac1{2\tau}\Big(\|\H_h^{n+1}\|_{L^2}^2-\|\H_h^n\|_{L^2}^2\Big)+
\frac1{8\tau}\Big(\|\H_h^{n+1}-\H_h^n\|_{L^2}^2-\|\H_h^n-\H_h^{n-1}\|_{L^2}^2\Big)  \\
&\quad
+\frac1{8\tau}\|\H_h^{n+1}-2\H_h^n+\H_h^{n-1}\|_{L^2}^2 .
\end{align*}

In turn, a substitution of $\l_h=\u_h^{n+1}$ in \eqref{scheme-c} yields
\begin{align}
\frac{1}{2\tau}\Big(\|\u_h^{n+1}\|_{L^2}^2- \|\widehat\u_h^{n+1}\|_{L^2}^2
+\|\u_h^{n+1}-\widehat\u_h^{n+1}\|_{L^2}^2 \Big)=0,
\end{align}
where we have used the divergence-free condition $\eqref{scheme-d}$ for $q_h$ being $p_h^{n+1},p_h^n$.

Next, we choose $\l_h=\nabla_h p_h^n$ in \eqref{scheme-c} and obtain
\begin{align}
-\Big(\nabla\cdot\widehat\u_h^{n+1},p_h^n\Big)
=\frac{\tau}{4}\Big(\|\nabla_hp_h^{n+1}\|_{L^2}^2-\|\nabla_hp_h^n\|_{L^2}^2-\|\nabla_h(p_h^{n+1}-p_h^n)\|_{L^2}^2 \Big).
\end{align}
Furthermore, we get the following result from \eqref{scheme-c2}
\begin{align}
\frac{1}{4}\|\nabla_h(p_h^{n+1}-p_h^n)\|_{L^2}^2
=\frac{1}{\tau^2}\|\u_h^{n+1}-\widehat\u_h^{n+1}\|_{L^2}^2 . \label{sta-nablap}
\end{align}

Summing up \eqref{sta-H}-\eqref{sta-nablap} leads to
\begin{equation}
  \begin{aligned}
    &\frac\mu{2\tau} \Big(\|\H_h^{n+1}\|_{L^2}^2-\|\H_h^n\|_{L^2}^2\Big)
    +\frac\mu{8\tau}\Big(\|\H_h^{n+1}-\H_h^n\|_{L^2}^2-\|\H_h^n-\H_h^{n-1}\|_{L^2}^2\Big) \\
     &\quad
     +\frac\mu{8\tau}\|\H_h^{n+1}-2\H_h^n+\H_h^{n-1}\|_{L^2}^2
    +\sigma^{-1}\|\nabla\times\widecheck\H_h^{n+\frac12}\|_{L^2}^2
    +\sigma^{-1}\|\nabla\cdot\widecheck\H_h^{n+\frac12}\|_{L^2}^2  \\
    &\quad
    +\frac1{2\tau} \Big(\|\u_h^{n+1}\|_{L^2}^2-\|\u_h^n\|_{L^2}^2\Big)
    +\nu\|\nabla\overline\u_h^{n+\frac12}\|_{L^2}^2
    +\frac{\tau}{8}\Big(\|\nabla_hp_h^{n+1}\|_{L^2}^2-\|\nabla_hp_h^n\|_{L^2}^2 \Big) \\
    &\le
    \sigma^{-1}\Big(\nabla\times \J^{n+\frac12},\widecheck\H_h^{n+\frac12}\Big)
    +\Big(\f^{n+\frac12},\overline\u_h^{n+\frac12}\Big).
    \label{takeinnerpro}
  \end{aligned}
\end{equation}
For the right-hand side of \eqref{takeinnerpro}, we can easily see that
\begin{equation*}
  \begin{aligned}
    \sigma^{-1}\Big(\nabla\times \J^{n+\frac12},\widecheck\H_h^{n+\frac12}\Big)
    &=\sigma^{-1}\Big(\J^{n+\frac12},\nabla\times\widecheck\H_h^{n+\frac12}\Big) \\
     &\leq\frac{1}{4\sigma}\|\J^{n+\frac12}\|_{L^2}^2+\sigma^{-1}\|\nabla\times\widecheck\H_h^{n+\frac12}\|_{L^2}^2,
  \end{aligned}
\end{equation*}
and
\begin{equation*}
  \begin{aligned}
    \Big(\f^{n+\frac12},\overline\u_h^{n+\frac12}\Big)
    &\leq\frac1{4\varepsilon}\|\f^{n+\frac12}\|_{L^2}^2+\varepsilon\|\overline\u_h^{n+\frac12}\|_{L^2}^2
     \leq\frac1{4\varepsilon}\|\f^{n+\frac12}\|_{L^2}^2+
    \varepsilon\|\nabla\overline\u_h^{n+\frac12}\|_{L^2}^2,
  \end{aligned}
\end{equation*}
where $\varepsilon$ is an arbitrarily small constant. Substituting the above estimates into \eqref{takeinnerpro}, we get the desired result \eqref{est-stability} immediately. This completes the proof of Theorem \ref{stability}.
\end{proof}

\section{Optimal error estimates}\label{sec-error}

We present the proof of the existence and uniqueness of numerical solution and the optimal error estimates \eqref{est-error}-\eqref{est-error2} in Section \ref{sec-error}.

\subsection{Preliminary results}
We introduce several types of projections. Let $P_h: L^2(\Omega)\rightarrow M_h$ denote the $L^2$ projection which satisfies
\begin{align}
(v-P_hv, q_h)=0, \quad v\in L^2(\Omega), \, \, \, \forall q_h\in M_h.
\label{def-Ph}
\end{align}
For the sake of brevity, if $v$ (defined in \eqref{def-Ph}) is a vector function in ${\bf L}^2(\Omega)$, we still use $P_h$ to denote the $L^2$ projection over the finite element space ${\bf X}_h$.
Furthermore, let $(\R_h\u,R_hp)$ denote the Stokes projection of $(\u,p)\in {\bf{H}}^1_0(\Omega)\times L^2(\Omega)/\mathbb{R}$ satisfying
\begin{align}
&\nu(\nabla( \u- \R_h\u), \nabla\v_h)-(p-R_hp,\nabla\cdot\v_h)=0, &&\forall\, \v_h\in{\bf{X}}_h, \label{def-Rh1}\\
&(\nabla\cdot(\u-\R_h\u),q_h) = 0, &&\forall\, q_h\in M_h. \label{def-Rh2}
\end{align}
We also introduce the Maxwell projection operator $\Pi_h:$ $\ring{\bf H}^1(\Omega)\rightarrow {\bf S}_h$, by
\begin{align}
(\nabla\times(\H-\Pi_h\H), \nabla\times{\bm w}_h)+(\nabla\cdot(\H-\Pi_h\H), \nabla\cdot{\bm w}_h) =0,
\quad \H\in\ring{\bf H}^1(\Omega) , \forall {\bm w}_h \in {\bf S}_h .
\end{align}

For the above projections, the following estimates are recalled~\cite{GR1987,Thomee2006}.

\begin{lemma}
The following estimates are valid for the $L^2$ projection, Stokes projection, and Maxwell projection:
\begin{align}
&
\|P_hv\|_{W^{m,s}} \le C\|v\|_{W^{m,s}}, \label{Ih-est1} \\
&
\|v-P_hv\|_{L^2} \le Ch^{\ell+1}\|v\|_{H^{\ell+1}}, \label{Ih-est2}
\end{align}
for $m=0,1$, $0\le\ell\le r$, $1\le s\le \infty$,
and
\begin{align}
&\|\R_h\u\|_{W^{1,s}}+\|R_hp\|_{L^s} \le C (\|\u\|_{W^{1,s}}+\|p\|_{L^s}) , &&  \label{ph-infty} \\
&\|\u-\R_h\u\|_{L^s}+h\|\u-\R_h\u\|_{W^{1,s}} \le Ch^{\ell+1}(\|\u\|_{W^{\ell+1,s}}+\|p\|_{W^{\ell,s}}), \label{Ph-est1} \\
&\|p-R_hp\|_{L^s} \le Ch^{\ell}(\|\u\|_{W^{\ell+1,s}}+\|p\|_{W^{\ell,s}}), \label{Ph-est} \\
&
\|\partial_t(\u-\R_h\u)\|_{L^s} + h\|\partial_t(p-R_hp)\|_{L^s}
\le
Ch^{\ell+1}(\|\partial_t\u\|_{W^{\ell+1,s}}+\|\partial_t p\|_{W^{\ell,s}}),  \label{Ph-est-2}
\end{align}
for $0\le\ell\le r$, $1<s<\infty$,
and
\begin{align}
&
\|\H-\Pi_h\H\|_{L^2}+h\|\H-\Pi_h\H\|_{H^1} \le Ch^{\ell+1}\|\H\|_{H^{\ell+1}} ,
\label{Pih-est}
\end{align}
for $0\le\ell\le r$, where $C$ is a positive constant independent of  $h$.
\end{lemma}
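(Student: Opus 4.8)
The plan is to treat the three projection operators separately, since each is defined by a linear elliptic or saddle-point problem and the associated estimates follow from one standard template: (i) well-posedness and stability of the projection, (ii) quasi-optimality in the natural energy norm via C\'ea's lemma, and (iii) an Aubin--Nitsche duality argument to upgrade the energy rate to the $L^2$ rate. The $L^s$ and $W^{1,s}$ versions require, in addition, maximal-regularity of the continuous operators and discrete-stability results valid on quasi-uniform meshes. Throughout, I would insert the Scott--Zhang or Lagrange interpolant $I_h$ and use the standard interpolation bounds $\|v - I_h v\|_{W^{m,s}} \le C h^{\ell+1-m}\|v\|_{W^{\ell+1,s}}$.

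For the $L^2$ projection $P_h$, estimate \eqref{Ih-est2} is immediate: since $P_h$ is the orthogonal projection in $L^2$, $\|v-P_hv\|_{L^2}\le\|v-I_hv\|_{L^2}\le Ch^{\ell+1}\|v\|_{H^{\ell+1}}$. The stability bound \eqref{Ih-est1} is trivial for $(m,s)=(0,2)$; for general $1\le s\le\infty$ it is the $L^s$-stability of $P_h$, which holds on quasi-uniform meshes (Crouzeix--Thom\'ee), and the $W^{1,s}$-stability then follows by writing $\nabla P_h v=\nabla P_h(v-I_h v)+\nabla I_h v$, bounding the first term by an inverse inequality combined with the $L^s$-stability and the interpolation error, and the second by interpolation stability.

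For the Stokes projection $(\R_h\u,R_hp)$ of \eqref{def-Rh1}--\eqref{def-Rh2}, well-posedness rests on the discrete inf-sup (LBB) condition for the Taylor--Hood pair $({\bf X}_h,M_h)$. Given inf-sup stability, C\'ea's lemma yields the $H^1\times L^2$ quasi-optimality and hence the $s=2$ cases of \eqref{Ph-est1}--\eqref{Ph-est}, while the $L^2$ velocity bound in \eqref{Ph-est1} comes from Aubin--Nitsche, using the $H^2\times H^1$ regularity of the stationary Stokes problem on the convex polyhedron $\Omega$. The general-$s$ stability \eqref{ph-infty} and the $W^{1,s}$ half of \eqref{Ph-est1} follow from maximal $L^s$-regularity of the Stokes operator together with the $W^{1,s}$-stability of the discrete Stokes projection on quasi-uniform meshes, as developed in \cite{GR1987}. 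Finally, \eqref{Ph-est-2} needs no new argument: the defining relations \eqref{def-Rh1}--\eqref{def-Rh2} are linear with $h$-independent coefficients, so the projection commutes with $\partial_t$, i.e. $\partial_t\R_h\u=\R_h\partial_t\u$ and $\partial_t R_hp=R_h\partial_t p$, reducing \eqref{Ph-est-2} to \eqref{Ph-est1}--\eqref{Ph-est} applied to $(\partial_t\u,\partial_t p)$.

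For the Maxwell projection $\Pi_h$, the governing bilinear form $a(\v,\w)=(\nabla\times\v,\nabla\times\w)+(\nabla\cdot\v,\nabla\cdot\w)$ is coercive on $\ring{\bf H}^1(\Omega)$ by the Friedrichs--Gaffney inequality $\|\nabla\v\|_{L^2}\le C(\|\nabla\times\v\|_{L^2}+\|\nabla\cdot\v\|_{L^2})$, which holds precisely because $\Omega$ is convex; Lax--Milgram then gives well-posedness and C\'ea's lemma the $H^1$ quasi-optimality, and the $L^2$ bound in \eqref{Pih-est} follows by Aubin--Nitsche using the $H^2$-regularity of the associated curl-curl plus grad-div problem on the convex domain. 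The main obstacle I expect lies not in any single Hilbert-space argument but in the $L^s$/$W^{1,s}$ estimates for the Stokes projection, \eqref{ph-infty} and the gradient part of \eqref{Ph-est1}: these fall outside the reach of the Lax--Milgram/Aubin--Nitsche machinery and genuinely require maximal $L^s$-regularity of the continuous Stokes system together with a weighted-norm or discrete-Green's-function analysis of the projection on quasi-uniform grids. The convexity of $\Omega$ is essential in several places here, both for the elliptic regularity feeding the duality arguments and for the coercivity of the Maxwell form.
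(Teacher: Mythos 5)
Your proposal cannot be judged against an internal argument, because the paper gives none: this lemma is stated as a collection of known projection estimates and is simply ``recalled'' from \cite{GR1987,Thomee2006}, with the burden of proof deferred to the literature. Your sketch is, in outline, precisely the machinery behind those citations, and the individual steps are sound: $L^2$-orthogonality plus interpolation for \eqref{Ih-est2}; $L^s$ stability of $P_h$ on quasi-uniform meshes plus an inverse estimate for \eqref{Ih-est1}; inf-sup stability, quasi-optimality, and Aubin--Nitsche with convex-domain $H^2\times H^1$ Stokes regularity for the $s=2$ cases of \eqref{Ph-est1}--\eqref{Ph-est}; the observation that the Stokes projection commutes with $\partial_t$, which reduces \eqref{Ph-est-2} to the stationary estimates applied to $(\partial_t\u,\partial_t p)$; and coercivity plus duality for \eqref{Pih-est}. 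Two caveats are worth recording. First, attributing \eqref{ph-infty} and the $W^{1,s}$ half of \eqref{Ph-est1} to \cite{GR1987} is inaccurate: Girault--Raviart does not contain $L^s$/$W^{1,s}$ stability of the discrete Stokes projection; as your own closing paragraph says, these require maximal regularity plus weighted-norm or discrete Green's function analysis (Dur\'an--Nochetto, Girault--Nochetto--Scott), so that final paragraph, not the middle one, is the correct provenance. Second, for the Maxwell projection the gradient bound $\|\nabla\v\|_{L^2}\le C(\|\nabla\times\v\|_{L^2}+\|\nabla\cdot\v\|_{L^2})$ alone is not coercivity of $a(\cdot,\cdot)$ on $\ring{\bf H}^1(\Omega)$: you also need the zeroth-order control $\|\v\|_{L^2}\le C(\|\nabla\times\v\|_{L^2}+\|\nabla\cdot\v\|_{L^2})$, which holds because the joint kernel of curl and div with vanishing tangential trace is trivial on a convex (hence simply connected, connected-boundary) domain --- a standard compactness argument that your sketch skips. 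With those repairs, your proposal is a faithful self-contained reconstruction of what the paper only cites, and it usefully makes explicit which hypotheses each estimate consumes: convexity of $\Omega$ for the elliptic regularity and the Friedrichs/Gaffney inequality, and quasi-uniformity of the mesh for the inverse and $L^s$-stability ingredients.
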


Next, we recall two lemmas that will be frequently used in this paper.
\begin{lemma}[\cite{BS2002}]
Given $v_h$ in the finite element spaces ${\bf X}_h$, $M_h$, or ${\bf S}_h$, the following inverse inequality holds
\begin{align}
\|v_h\|_{W^{m,s}} \le Ch^{n-m+\frac{d}{s}-\frac{d}{q}}\|v_h\|_{W^{n,q}},
\label{inverse-1}
\end{align}
for $0\le n\le m\le 1$, $1\le q\le s\le \infty$, where $d$ denotes the dimension of the space and $C$ is a positive constant independent of  $h$.
\end{lemma}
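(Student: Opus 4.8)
The plan is to prove the estimate cell by cell on a fixed reference configuration, where the finite-dimensionality of the local polynomial space makes all (semi)norms equivalent, and then to reassemble the local bounds using the quasi-uniformity of $\Im_h$. The whole argument is local in nature, so for the discontinuous space $M_h$ the norm on the left of \eqref{inverse-1} is understood cellwise (broken), which coincides with the global norm on the conforming spaces ${\bf X}_h$ and ${\bf S}_h$.

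First I would reduce to a single cell. Since $v_h$ restricted to each $K=K_j$ is a polynomial of degree at most $r$, I fix $K$ and let $F_K(\hat x)=B_K\hat x+b_K$ be the affine map from a fixed reference simplex $\hat K$ onto $K$, and set $\hat v:=v_h\circ F_K$. Shape-regularity of the partition gives the classical bounds $\|B_K\|\le Ch_K$, $\|B_K^{-1}\|\le Ch_K^{-1}$, and $|\det B_K|\sim h_K^d$, with $h_K=\mathrm{diam}\,K$. On the finite-dimensional space ${\bf P}_r(\hat K)$ (or $P_{r-1}(\hat K)$ for $M_h$) all norms are equivalent, and since $0\le n\le m\le 1$ there is a constant $\hat C$, depending only on $\hat K,r,m,n,s,q$, with $|\hat v|_{W^{m,s}(\hat K)}\le \hat C\|\hat v\|_{W^{n,q}(\hat K)}$. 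I then transport both sides back to $K$ by the standard change-of-variables and chain-rule scaling estimates: pulling the left-hand seminorm forward contributes a factor $\|B_K^{-1}\|^m|\det B_K|^{1/s}$, while bounding $\|\hat v\|_{W^{n,q}(\hat K)}$ by $\|v_h\|_{W^{n,q}(K)}$ contributes $\|B_K\|^n|\det B_K|^{-1/q}$. Inserting the shape-regularity bounds collapses the product of scaling factors to $Ch_K^{\,n-m+d/s-d/q}$, yielding the per-cell estimate $|v_h|_{W^{m,s}(K)}\le Ch_K^{\,n-m+d/s-d/q}\|v_h\|_{W^{n,q}(K)}$; the lower-order terms making up the full $W^{m,s}$ norm are handled identically, their $h_K$-exponents being no smaller.

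Finally I would sum over the cells. Quasi-uniformity lets me replace each $h_K$ by $h$ up to a fixed constant, and the hypotheses $n\le m$, $q\le s$ force $n-m\le 0$ and $d/s-d/q\le 0$, so the exponent of $h$ is nonpositive (consistent with an inverse estimate). The one genuinely delicate point is the reassembly across mismatched Lebesgue exponents: because $s\ge q$, the elementary sequence embedding $\|(a_K)\|_{\ell^s}\le\|(a_K)\|_{\ell^q}$ for nonnegative $a_K$ allows me to pass from $\big(\sum_K\|v_h\|_{W^{n,q}(K)}^s\big)^{1/s}$ to $\big(\sum_K\|v_h\|_{W^{n,q}(K)}^q\big)^{1/q}=\|v_h\|_{W^{n,q}(\Omega)}$, which gives \eqref{inverse-1}; the case $s=\infty$ uses the maximum over cells in place of the $\ell^s$-sum, with the same per-cell scaling. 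I expect this $\ell^s$-versus-$\ell^q$ reassembly, together with the careful bookkeeping of scaling powers through the reference map, to be the main obstacle, while the remainder is the routine machinery of shape-regular finite element scaling arguments.
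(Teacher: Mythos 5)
The paper offers no proof of this lemma --- it is recalled verbatim from the cited reference [BS2002] --- so the comparison is against the standard scaling argument, which is indeed the route you outline. Most of your proposal is correct: the per-cell reduction, the affine map to $\hat K$ with $\|B_K\|\le Ch_K$, $\|B_K^{-1}\|\le Ch_K^{-1}$, $|\det B_K|\sim h_K^d$, the left-hand transport factor $\|B_K^{-1}\|^m|\det B_K|^{1/s}$, and the $\ell^q\hookrightarrow\ell^s$ reassembly (including the $s=\infty$ modification) are all right. However, one step fails as stated: the claim that bounding $\|\hat v\|_{W^{n,q}(\hat K)}$ by $\|v_h\|_{W^{n,q}(K)}$ contributes the factor $\|B_K\|^n|\det B_K|^{-1/q}$. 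For $n=1$ the zeroth-order part of the reference norm scales as $\|\hat v\|_{L^q(\hat K)}=|\det B_K|^{-1/q}\|v_h\|_{L^q(K)}$, with no compensating factor of $\|B_K\|$; the correct full-norm transport constant is therefore of order $|\det B_K|^{-1/q}\max\bigl(1,\|B_K\|^n\bigr)\sim h_K^{-d/q}$, not $h_K^{\,n-d/q}$. With your bookkeeping the case $m=n=1$ would only yield the exponent $-1+d/s-d/q$, one full power of $h$ short of \eqref{inverse-1}. A concrete counterexample to the claimed scaling: take $d=1$, $K=(0,h)$, $v_h\equiv 1$; then $\|\hat v\|_{W^{1,q}(\hat K)}=1$, while $\|B_K\|\,|\det B_K|^{-1/q}\,\|v_h\|_{W^{1,q}(K)}=h\cdot h^{-1/q}\cdot h^{1/q}=h\to 0$.

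The repair is standard and small, but it is a different step from the one you wrote. Since $0\le n\le m\le 1$, only three cases occur, and the problematic one is $m=n=1$; there you must argue seminorm-to-seminorm on $\hat K$: the gradient maps $P_r(\hat K)$ into a finite-dimensional space on which $\|\cdot\|_{L^s(\hat K)}$ and $\|\cdot\|_{L^q(\hat K)}$ are equivalent, whence $|\hat v|_{W^{1,s}(\hat K)}\le C\,|\hat v|_{W^{1,q}(\hat K)}$ (both sides vanish exactly on constants, so no quotient issue arises). Both transports are then pure seminorm scalings, giving $|v_h|_{W^{1,s}(K)}\le C\,\|B_K^{-1}\|\,|\det B_K|^{1/s}\,\|B_K\|\,|\det B_K|^{-1/q}\,|v_h|_{W^{1,q}(K)}\le Ch_K^{\,d/s-d/q}|v_h|_{W^{1,q}(K)}$, and the zeroth-order part of the left-hand norm is absorbed via the $(m,n)=(0,0)$ case, $\|v_h\|_{L^s(K)}\le Ch_K^{\,d/s-d/q}\|v_h\|_{L^q(K)}$. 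The cases $(1,0)$ and $(0,0)$ go through exactly as you wrote them, since there the right-hand side is a genuine $L^q$ norm and finite-dimensional norm equivalence applies directly. With this substitution your assembly over cells is correct and the lemma follows.
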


\begin{lemma}\label{lem-nablah}
The discrete gradient operator $\nabla_h: M_h\rightarrow {\bf X}_h$ (defined in \eqref{def-nablah}) satisfies the following estimates
\begin{align}
&
\|\nabla_hq_h\|_{L^2} \le Ch^{-1}\|q_h\|_{L^2},
\label{inverse-2} \\
&
\|\nabla_hq_h\|_{L^3} \le Ch^{-1}\|q_h\|_{L^3},
\label{inverse-3}
\end{align}
for any $q_h\in M_h$, where $C$ is a positive constant independent of  $h$.
\end{lemma}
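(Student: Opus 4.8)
The plan is to derive both estimates from the defining duality relation \eqref{def-nablah}, combined with the inverse inequality \eqref{inverse-1} and the $L^s$-stability of the $L^2$-projection $P_h$ onto ${\bf X}_h$ recorded in \eqref{Ih-est1}. The guiding observation is that $\nabla_h q_h \in {\bf X}_h$, so that after inserting $P_h$ one may replace testing against an arbitrary $L^2$ (or $L^{3/2}$) function by testing against a discrete function, for which \eqref{def-nablah} applies. I would treat the $L^2$ bound by a direct energy argument and the $L^3$ bound by duality.

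For the $L^2$ estimate \eqref{inverse-2}, I would take $\v_h = \nabla_h q_h$ in \eqref{def-nablah}, which gives $\|\nabla_h q_h\|_{L^2}^2 = -(\nabla\cdot\nabla_h q_h, q_h) \le \|\nabla\cdot\nabla_h q_h\|_{L^2}\|q_h\|_{L^2}$ by Cauchy--Schwarz. The inverse inequality \eqref{inverse-1} with $m=1$, $n=0$, $s=q=2$ bounds $\|\nabla\cdot\nabla_h q_h\|_{L^2} \le C\|\nabla_h q_h\|_{H^1} \le Ch^{-1}\|\nabla_h q_h\|_{L^2}$, and dividing through by $\|\nabla_h q_h\|_{L^2}$ yields \eqref{inverse-2}.

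For the $L^3$ estimate \eqref{inverse-3}, the direct choice of test function is no longer available, so I would use duality. Writing $\|\nabla_h q_h\|_{L^3} = \sup\{(\nabla_h q_h,\w): \w\in{\bf L}^{3/2}(\Omega),\ \|\w\|_{L^{3/2}}=1\}$ and using that $\nabla_h q_h\in{\bf X}_h$ is $L^2$-orthogonal to the projection residual $\w-P_h\w$, I obtain $(\nabla_h q_h,\w) = (\nabla_h q_h,P_h\w) = -(\nabla\cdot(P_h\w),q_h)$ by \eqref{def-nablah}. H\"older's inequality gives $|(\nabla\cdot(P_h\w),q_h)| \le \|\nabla\cdot(P_h\w)\|_{L^{3/2}}\|q_h\|_{L^3}$, and the inverse inequality \eqref{inverse-1} (with $m=1$, $n=0$, $s=q=3/2$) followed by the $L^{3/2}$-stability \eqref{Ih-est1} of $P_h$ bounds $\|\nabla\cdot(P_h\w)\|_{L^{3/2}} \le Ch^{-1}\|P_h\w\|_{L^{3/2}} \le Ch^{-1}\|\w\|_{L^{3/2}}$. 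Taking the supremum over $\w$ then yields \eqref{inverse-3}.

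I expect the main obstacle to be obtaining the sharp power $h^{-1}$ in the $L^3$ estimate rather than a weaker one. A naive route — bounding $\|\nabla_h q_h\|_{L^3}$ by $\|\nabla_h q_h\|_{L^2}$ through an inverse inequality, invoking \eqref{inverse-2}, and then converting $\|q_h\|_{L^2}$ back to $\|q_h\|_{L^3}$ — loses a factor $h^{-d/6}$ and produces the suboptimal exponent $h^{-1-d/6}$. The projection--duality argument above avoids this loss precisely because it keeps the entire pairing in the $L^{3/2}$--$L^3$ duality and relies on the $L^{3/2}$-stability of $P_h$ on quasi-uniform meshes, instead of norm-equivalence between distinct Lebesgue exponents.
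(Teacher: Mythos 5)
Your proposal is correct and follows essentially the same route as the paper's proof: the $L^2$ bound via testing \eqref{def-nablah} with $\v_h=\nabla_h q_h$ and the inverse inequality, and the $L^3$ bound via $L^3$--$L^{3/2}$ duality, inserting the $L^2$ projection $P_h$, applying \eqref{def-nablah}, the inverse inequality, and the $h$-uniform $L^{3/2}$-stability of $P_h$. Your closing remark about why the duality argument avoids the suboptimal $h^{-1-d/6}$ loss is a nice observation, but the core argument is identical to the paper's.
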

\begin{proof}
The estimate \eqref{inverse-2} follows immediately by substituting $\v_h=\nabla_hq_h$ into \eqref{def-nablah} and inverse inequality \eqref{inverse-1}.

It remains to prove  \eqref{inverse-3}. Given $q_h\in M_h$, it is easy to see that
\begin{align*}
(\nabla_hq_h,\v)
=(\nabla_hq_h,P_h\v)
=-(q_h,\nabla\cdot P_h\v)
&\le
\|q_h\|_{L^3}\|\nabla\cdot P_h\v\|_{L^\frac{3}{2}} \\
&\le
C\|q_h\|_{L^3}h^{-1}\|P_h\v\|_{L^\frac{3}{2}}
 \le
Ch^{-1}\|q_h\|_{L^3}\|\v\|_{L^\frac{3}{2}},
\end{align*}
for all $\v\in L^{\frac32}(\Omega)$.
Here, $P_h$ is the $L^2$ projection, which has a bounded extension to $L^p(\Omega)$ for $1\le p\le \infty$, with a bound independent of $h$; see \cite[Lemma 6.1]{Thomee2006}.
Then, using the duality between $L^3(\Omega)$ and $L^{\frac32}(\Omega)$, it is straightforward to derive \eqref{inverse-3}. The proof of Lemma~\ref{lem-nablah} is complete.
\end{proof}

\subsection{Error equations}
To establish error estimates for the scheme \eqref{scheme-a}-\eqref{scheme-d}, we introduce an intermediate
function $\widehat{\R_h\u^{n+1}}\in {\bm X}_h$, defined as
\begin{align}
&\bigg(\frac{\R_h\u^{n+1}-\widehat{\R_h\u^{n+1}}}{\tau},\l_h\bigg)-\frac12\Big(R_hp^{n+1}-R_hp^n,\nabla\cdot\l_h\Big)=0 ,  \quad \forall  \l_h\in{\bm X}_h , \label{Rh-hat}
\\
 &  \mbox{or equivalently,} \quad
\frac{\R_h\u^{n+1}-\widehat{\R_h\u^{n+1}}}{\tau}
=-\frac12\nabla_h\big(R_hp^{n+1}-R_hp^n\big) . \label{Rh-hat2}
\end{align}

With the intermediate function defined above and the projections introduced in the previous subsection, the MHD system \eqref{PDEa}-\eqref{PDEc} can be rewritten as follows:
\begin{align}
&
\mu\bigg(\frac{\Pi_h\H^{n+1}-\Pi_h\H^n}{\tau},\w_h\bigg)
+\sigma^{-1}\Big(\nabla\times\Pi_h\widecheck\H^{n+\frac12},\nabla\times\w_h \Big)
+\sigma^{-1}\Big(\nabla\cdot\Pi_h\widecheck\H^{n+\frac12},\nabla\cdot\w_h \Big)
\nonumber\\
&\quad\quad
-\mu\Big(\overline\u^{n+\frac12}\times\widetilde\H^{n+\frac12} ,\nabla\times\w_h\Big)
=\sigma^{-1}\Big(\nabla\times \J^{n+\frac12},\w_h\Big) +R_{\H}^{n+1}(\w_h),
\label{scheme-a-2} \\
&
\bigg(\frac{\widehat{\R_h\u^{n+1}}-\R_h\u^n}{\tau},\v_h\bigg)
+\nu\bigg(\nabla\Big(\frac{\widehat{\R_h\u^{n+1}} +\R_h\u^n}{2}\Big),\nabla\v_h\bigg)
+b\Big(\widetilde\u^{n+\frac12}, \overline\u^{n+\frac12},\v_h \Big)
\nonumber\\
&\quad\quad
-\Big(R_hp^n,\nabla\cdot\v_h\Big)
+\mu\Big(\widetilde\H^{n+\frac12}\times(\nabla\times\widecheck\H^{n+\frac12}),\v_h\Big)
\nonumber\\
&\quad=
\Big(\f^{n+\frac12},\v_h\Big)
+\bigg(\frac{\widehat{\R_h\u^{n+1}}-\R_h\u^{n+1}}{\tau},\v_h\bigg)
+\nu\bigg(\nabla\Big(\frac{\widehat{\R_h\u^{n+1}} - \R_h\u^{n+1}}{2}\Big),\nabla\v_h\bigg)
\nonumber\\
&\quad\quad
-\bigg(R_hp^n-\frac{R_hp^{n+1}+R_hp^n}{2},\nabla\cdot\v_h\bigg)
+R_{\u}^{n+1}(\v_h),
\label{scheme-b-2} \\
&
\Big(\nabla\cdot\u^{n+1},q_h\Big)=0,   \label{scheme-d-2}
\end{align}
for any $(\w_h,\v_h,q_h)\in({\bm S}_h,{\bm X}_h,M_h)$ and $n=1,2,\dots,N-1$, where we denote $\widehat \u^{n+1}:=\u^{n+1}$, $R_\H^{n+1}(\w_h)$ and $R_\u^{n+1}(\v_h)$ stand for the truncation errors satisfying
\begin{align}
&
R_{\H}^{n+1}(\w_h) \nonumber\\
&=\mu\bigg( \frac{\Pi_h\H^{n+1}-\Pi_h\H^n}{\tau}-\partial_t\H^{n+\frac12},\w_h\bigg)
+\sigma^{-1}\Big(\nabla\times(\Pi_h\widecheck\H^{n+\frac12}- \H^{n+\frac12}),\nabla\times\w_h \Big) \nonumber\\
&\quad
+\sigma^{-1}\Big(\nabla\cdot( \Pi_h\widecheck\H^{n+\frac12}- \H^{n+\frac12}),\nabla\cdot\w_h \Big)
-\mu\Big(\overline\u^{n+\frac12}\times\widetilde\H^{n+\frac12}-\u^{n+\frac12}\times\H^{n+\frac12} ,\nabla\times\w_h\Big) ,\\
&
R_{\u}^{n+1}(\v_h) \nonumber\\
&=\bigg(\frac{\R_h\u^{n+1}-\R_h\u^n}{\tau}-\partial_t\u^{n+\frac12},\v_h\bigg)
+\nu\bigg(\nabla\Big(\frac{\R_h\u^{n+1}+\R_h\u^n}{2}-\u^{n+\frac12}\Big),\nabla\v_h\bigg)
\nonumber\\
&\quad
+\bigg(b\Big(\widetilde\u^{n+\frac12}, \overline\u^{n+\frac12},\v_h \Big)-b\Big(\u^{n+\frac12}, \u^{n+\frac12},\v_h \Big) \bigg)
-\bigg(\frac{R_hp^{n+1}+R_hp^n}{2} -p^{n+\frac12},\nabla\cdot\v_h\bigg) \nonumber\\
&\quad
+\mu\Big(\widetilde\H^{n+\frac12}\times(\nabla\times\widecheck\H^{n+\frac12}) - \H^{n+\frac12}\times(\nabla\times\H^{n+\frac12}),\v_h\Big).
\end{align}

Utilizing the projection error estimates presented in the previous subsection, we only need to estimate the following error functions
\begin{align*}
&e_{\H}^n=\Pi_h\H^n-\H_h^n, \quad e_{\u}^n=\R_h\u^n-\u_h^n, \\
&\widehat e_{\u}^n=\widehat{\R_h\u^n}-\widehat\u_h^n, \quad\quad e_{p}^n=R_hp^n-p_h^n,
\end{align*}
for $n=1,2,\dots,N$.
From the system \eqref{Rh-hat}-\eqref{scheme-d-2} and the fully discrete numerical scheme \eqref{scheme-a}-\eqref{scheme-d}, we observe that the error functions $(e_{\H}^n,e_{\u}^n,\widehat e_{\u}^n,e_p^n)$ satisfy the following equations:
\begin{align}
&
\mu\bigg(\frac{e_{\H}^{n+1}-e_{\H}^n}{\tau},\w_h\bigg)
+\sigma^{-1}\Big(\nabla\times \widecheck e_{\H}^{n+\frac12},\nabla\times\w_h \Big)
+\sigma^{-1}\Big(\nabla\cdot \widecheck e_{\H}^{n+\frac12},\nabla\cdot\w_h \Big)
\nonumber\\
&\quad=
\mu\bigg\{\Big(\overline\u^{n+\frac12}\times\widetilde\H^{n+\frac12} ,\nabla\times\w_h\Big)
- \Big(\overline\u_h^{n+\frac12}\times\widetilde\H_h^{n+\frac12} ,\nabla\times\w_h\Big) \bigg\}
+R_{\H}^{n+1}(\w_h),
\label{error-a} \\
&
\bigg(\frac{\widehat e_{\u}^{n+1} - e_{\u}^n}{\tau},\v_h\bigg)
+\nu\Big(\nabla \overline e_{\u}^{n+\frac12},\nabla\v_h\Big)
-\Big(e_p^n,\nabla\cdot\v_h\Big)
\nonumber\\
&\quad=
\bigg(\frac{\widehat{\R_h\u^{n+1}}-\R_h\u^{n+1}}{\tau},\v_h\bigg)
+\nu\bigg(\nabla\Big(\frac{\widehat{\R_h\u^{n+1}} - \R_h\u^{n+1}}{2}\Big),\nabla\v_h\bigg)
\nonumber\\
&\quad\quad
-\bigg(R_hp^n-\frac{R_hp^{n+1}+R_hp^n}{2},\nabla\cdot\v_h\bigg)
-\bigg\{ b\Big(\widetilde\u^{n+\frac12}, \overline\u^{n+\frac12},\v_h \Big)
- b\Big(\widetilde\u_h^{n+\frac12}, \overline\u_h^{n+\frac12},\v_h \Big) \bigg\}
\nonumber\\
&\quad\quad
-\mu\bigg\{ \Big(\widetilde\H^{n+\frac12}\times(\nabla\times\widecheck\H^{n+\frac12}),\v_h\Big)
- \Big(\widetilde\H_h^{n+\frac12}\times(\nabla\times\widecheck\H_h^{n+\frac12}),\v_h\Big) \bigg\}
+R_{\u}^{n+1}(\v_h),
\label{error-b} \\
&
\bigg(\frac{e_{\u}^{n+1}-\widehat e_{\u}^{n+1}}{\tau},\l_h\bigg) - \frac12\Big(e_p^{n+1}-e_p^n,\nabla\cdot\l_h\Big)=0,  \label{error-c} \\
&
\Big(\nabla\cdot e_{\u}^{n+1},q_h\Big)=0,   \label{error-d}
\end{align}
for any $(\w_h,\v_h,\l_h,q_h)\in({\bm S}_h,{\bm X}_h,{\bm X}_h,M_h)$ and $n=1,2,\dots,N-1$.

\subsection{Proof of Theorem \ref{thm-error}}
In this subsection, we present a detailed proof of Theorem \ref{thm-error}. The following lemma will be used in the analysis.
\begin{lemma}\label{lem}
Under the regularity assumption \eqref{reg-asp},
the Stokes projection defined in \eqref{def-Rh1}-\eqref{def-Rh2} satisfies the following estimates:
\begin{align}
&
\|\nabla_hR_h\partial_t p\|_{L^3} \le C, \label{lem-est0} \\
&
\|\nabla(\nabla_hR_h\partial_t p)\|_{L^2}\le C, \label{lem-est1}
\end{align}
where $C$ is a positive constant independent of $h$.
\end{lemma}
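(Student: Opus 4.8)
The plan is to exploit the defining relation of the Stokes projection together with the discrete gradient. Since $\R_h$ and $R_h$ are linear and time independent, they commute with $\partial_t$, so $(\R_h\partial_t\u,R_h\partial_t p)$ is the Stokes projection of $(\partial_t\u,\partial_t p)$. Testing \eqref{def-Rh1}, written for $(\partial_t\u,\partial_t p)$, against $\v_h\in{\bf X}_h$ and using the definition \eqref{def-nablah} of $\nabla_h$, I would first record the identity
\begin{align*}
(\nabla_h R_h\partial_t p,\v_h)=(\nabla\partial_t p,\v_h)+\nu\big(\nabla(\partial_t\u-\R_h\partial_t\u),\nabla\v_h\big),\qquad\forall\,\v_h\in{\bf X}_h,
\end{align*}
after the integration by parts $-(\partial_t p,\nabla\cdot\v_h)=(\nabla\partial_t p,\v_h)$, valid since $\v_h\in{\bf H}^1_0(\Omega)$ and $\partial_t p\in H^1$. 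Equivalently $\nabla_h R_h\partial_t p=P_h\nabla\partial_t p+\bm z_h$, where $\bm z_h\in{\bf X}_h$ satisfies $(\bm z_h,\v_h)=\nu(\nabla(\partial_t\u-\R_h\partial_t\u),\nabla\v_h)$. The regularity \eqref{reg-asp} gives $\partial_t p\in H^2\hookrightarrow W^{1,3}$ and $\partial_t\u\in H^3$, and, applying \eqref{Ph-est1} to $(\partial_t\u,\partial_t p)$, the superconvergent gradient bound $\|\partial_t\u-\R_h\partial_t\u\|_{W^{1,3}}\le Ch$.

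For \eqref{lem-est0} I would bound the two pieces separately. The first obeys $\|P_h\nabla\partial_t p\|_{L^3}\le C\|\nabla\partial_t p\|_{L^3}\le C\|\partial_t p\|_{H^2}\le C$ by the $L^3$ stability \eqref{Ih-est1} and Sobolev embedding. For $\bm z_h$ I use duality against ${\bf L}^{3/2}(\Omega)$: for $\v\in{\bf L}^{3/2}(\Omega)$,
\begin{align*}
(\bm z_h,\v)=(\bm z_h,P_h\v)=\nu\big(\nabla(\partial_t\u-\R_h\partial_t\u),\nabla P_h\v\big)\le\nu\,\|\nabla(\partial_t\u-\R_h\partial_t\u)\|_{L^3}\,\|\nabla P_h\v\|_{L^{3/2}},
\end{align*}
and the inverse inequality \eqref{inverse-1} with the $L^{3/2}$ stability of $P_h$ yields $\|\nabla P_h\v\|_{L^{3/2}}\le Ch^{-1}\|\v\|_{L^{3/2}}$. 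The crucial point is that the $\mathcal O(h)$ factor from $\|\nabla(\partial_t\u-\R_h\partial_t\u)\|_{L^3}$ absorbs the $\mathcal O(h^{-1})$ from the inverse estimate, giving $(\bm z_h,\v)\le C\|\v\|_{L^{3/2}}$ and hence $\|\bm z_h\|_{L^3}\le C$. This establishes \eqref{lem-est0}.

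For the $H^1$ bound \eqref{lem-est1} the same splitting is \emph{not} available: $P_h$ maps into ${\bf X}_h\subset{\bf H}^1_0(\Omega)$ while $\nabla\partial_t p$ does not vanish on $\partial\Omega$, so neither $P_h\nabla\partial_t p$ nor $\bm z_h$ is separately bounded in $H^1$ and only their sum is. The plan is therefore to keep $\nabla_h R_h\partial_t p$ intact and use the Stokes structure. Differentiating \eqref{def-Rh1}--\eqref{def-Rh2} in time and integrating by parts shows that $(\R_h\partial_t\u,R_h\partial_t p)$ is the Galerkin mixed approximation of a Stokes problem with data $\mathbf G:=\nabla\partial_t p-\nu\Delta\partial_t\u$, namely
\begin{align*}
\nu(\nabla\R_h\partial_t\u,\nabla\v_h)+(\nabla_h R_h\partial_t p,\v_h)=(\mathbf G,\v_h),\qquad\forall\,\v_h\in{\bf X}_h,
\end{align*}
together with $(\nabla\cdot\R_h\partial_t\u,q_h)=0$; here $\mathbf G$ is bounded in ${\bf H}^1$ since $\|\partial_t p\|_{H^2}+\|\partial_t\u\|_{H^3}\le C$ by \eqref{reg-asp}. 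I would then estimate $\|\nabla(\nabla_h R_h\partial_t p)\|_{L^2}$ by combining the $H^1$ stability \eqref{Ih-est1} applied to $\mathbf G$ with a discrete elliptic (Stokes) regularity bound for $\R_h\partial_t\u$ and the $W^{1,3}$ superconvergence used above.

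I expect this last step to be the main obstacle. In contrast with \eqref{lem-est0}, the $H^1$ bound cannot be reduced to size estimates of two summands followed by a single inverse inequality; the delicate issue is the near-boundary behaviour of the discrete gradient, which must be tracked through the coupled Stokes system rather than through the velocity and pressure errors in isolation. The remaining ingredients---the Sobolev embeddings for $\partial_t p$ and $\partial_t\u$, the projection stability estimates \eqref{Ih-est1} and \eqref{ph-infty}--\eqref{Ph-est-2}, the inverse inequality \eqref{inverse-1}, and Lemma \ref{lem-nablah}---are routine once the structural identities above are in place.
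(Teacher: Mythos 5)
Your proof of \eqref{lem-est0} is correct, but it follows a genuinely different route from the paper's. The paper never uses the Stokes system \eqref{def-Rh1} directly: it inserts the intermediate discrete functions $\nabla_hP_h\partial_tp$ and $P_h\nabla\partial_tp$, proves $\|P_h\nabla\partial_tp-\nabla_hP_h\partial_tp\|_{L^3}\le C$ by the duality computation \eqref{lem-eq}, and controls $\nabla_h(R_h\partial_tp-P_h\partial_tp)$ by the inverse estimate \eqref{inverse-3} of Lemma~\ref{lem-nablah} together with the $\mathcal{O}(h^2)$ bounds \eqref{Ih-est2} and \eqref{Ph-est-2}, so that the factor $h^{-1}$ is absorbed by $h^{2}$. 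You instead differentiate \eqref{def-Rh1} in time to obtain the representation $\nabla_hR_h\partial_tp=P_h\nabla\partial_tp+\bm z_h$ with $(\bm z_h,\v_h)=\nu(\nabla(\partial_t\u-\R_h\partial_t\u),\nabla\v_h)$, and you absorb the $h^{-1}$ from the inverse inequality into the $\mathcal{O}(h)$ bound for $\|\nabla(\partial_t\u-\R_h\partial_t\u)\|_{L^3}$. The mechanism is the same in both cases (a projection error beating an inverse factor), but the ingredients differ: yours needs the $W^{1,3}$ velocity error of the Stokes projection, while the paper's needs only pressure projection errors and Lemma~\ref{lem-nablah}. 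Either argument is acceptable for \eqref{lem-est0}.

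For \eqref{lem-est1}, however, your proposal contains a genuine gap: you write down the mixed system with data $\mathbf{G}=\nabla\partial_tp-\nu\Delta\partial_t\u$ and then appeal to an unspecified ``discrete Stokes regularity'' bound, which you yourself flag as the main obstacle; no proof is given. For the record, the paper's proof proceeds by exactly the splitting you declared unavailable: it bounds $\|\nabla P_h\nabla\partial_tp\|_{L^2}\le C$ by the $H^1$-stability in \eqref{Ih-est1} (this is step \eqref{lem-est1-1}), and then bounds $\|\nabla(\nabla_hR_h\partial_tp-P_h\nabla\partial_tp)\|_{L^2}\le Ch^{-1}\|\nabla_hR_h\partial_tp-P_h\nabla\partial_tp\|_{L^2}\le C$ using \eqref{inverse-1}--\eqref{inverse-2}, the $L^2$ analogue of \eqref{lem-eq}, and $\|R_h\partial_tp-P_h\partial_tp\|_{L^2}\le Ch^2$. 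So in the paper the two summands are estimated separately after all, and that is precisely the idea your write-up is missing.

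That said, your objection to the splitting is mathematically well founded and points at a real weakness of step \eqref{lem-est1-1}. With the paper's definition, $P_h$ acting on vector fields is the $L^2$ projection onto ${\bf X}_h\subset{\bf H}^1_0(\Omega)$, and the $L^2$ projection onto a space with homogeneous Dirichlet conditions is not $H^1$-stable on data with non-vanishing trace: already in one dimension, the projection of the constant function $1$ onto zero-boundary-value piecewise linears develops a boundary layer of width $h$ and has $H^1$ norm of order $h^{-1/2}$. Since $\nabla\partial_tp$ need not vanish on $\partial\Omega$ (it does not for the exact solution \eqref{exactsolution} of the numerical tests), \eqref{lem-est1-1} is legitimate only if $P_h$ there is reinterpreted as the projection onto the corresponding unconstrained piecewise-polynomial space. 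Unfortunately the boundary layer then reappears elsewhere: $\nabla_hR_h\partial_tp$ itself lies in ${\bf X}_h$, hence vanishes on $\partial\Omega$, while being within $\mathcal{O}(h^{1/2})$ of $\nabla\partial_tp$ in $L^2$; a multiplicative trace inequality applied to $\nabla\partial_tp-\nabla_hR_h\partial_tp$ then forces $\|\nabla(\nabla_hR_h\partial_tp)\|_{L^2}\gtrsim h^{-1/2}$ whenever the trace of $\nabla\partial_tp$ is non-trivial, so neither $P_h\nabla\partial_tp$, nor $\bm z_h$, nor their sum can be uniformly bounded in $H^1$ in that case. In other words, the obstacle you ran into is intrinsic rather than a defect of your particular strategy: without an additional hypothesis (such as $\nabla\partial_tp$ vanishing on $\partial\Omega$), \eqref{lem-est1} resists both your route and, on close inspection, the paper's own argument, and your proposed detour through discrete Stokes regularity would face exactly the same obstruction at the boundary.
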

\begin{proof}
By the regularity assumption \eqref{reg-asp} and the $L^3$ stability estimate of the $L^2$ projection, i.e., \eqref{Ih-est1}, we see that
\begin{align}
\|P_h\nabla\partial_t p\|_{L^3}
\le
C\|\nabla\partial_t p\|_{L^3}
\le
C. \label{lem-est0-1}
\end{align}
Since
\begin{align*}
(\v_h,P_h\nabla\partial_tp - \nabla_hP_h\partial_tp)
&=(\v_h,\nabla\partial_tp)+(\nabla\cdot\v_h,P_h\partial_tp)  \\
&=  - (\nabla\cdot\v_h,\partial_tp) +(\nabla\cdot\v_h,P_h\partial_tp)  \\
&\le \|\nabla\cdot\v_h\|_{L^{\frac{3}{2}}}\|\partial_t p-P_h\partial_t p\|_{L^3} \\
&\le Ch^{-1}\|\v_h\|_{L^{\frac32}}h\|\partial_tp\|_{W^{1,3}} \\
&\le C\|\v_h\|_{L^{\frac32}}\|\partial_tp\|_{W^{1,3}} ,
\end{align*}
for any $\v_h\in{\bf X}_h$, by the duality between $L^{\frac32}$ and $L^3$, we conclude that
\begin{align}
\|P_h\nabla\partial_tp-\nabla_hP_h\partial_tp\|_{L^3}\le C .  \label{lem-eq}
\end{align}
Consequently, with the help of the inverse inequality \eqref{inverse-3}, we obtain
\begin{align*}
\|\nabla_hR_h\partial_tp\|_{L^3}
&\le
\|\nabla_hR_h\partial_tp- \nabla_hP_h\partial_tp\|_{L^3}
+\|\nabla_hP_h\partial_tp-P_h\nabla\partial_tp\|_{L^3}
+\|P_h\nabla\partial_tp\|_{L^3}
\nonumber\\
&\le
\|\nabla_hR_h\partial_tp-\nabla_hP_h\partial_tp\|_{L^3}
+C
\nonumber\\
&\le
Ch^{-1}\|R_h\partial_tp-P_h\partial_tp\|_{L^3}
+C
\nonumber\\
&\le
Ch^{-1}\|R_h\partial_tp-\partial_tp\|_{L^3}
+Ch^{-1}\|\partial_tp-P_h\partial_tp\|_{L^3}
+C
\nonumber\\
&\le
Ch^{-1}h^2+Ch^{-1}h^2+C
\le
C,
\end{align*}
in which \eqref{Ih-est2} and the projection estimate \eqref{Ph-est-2} have been used in the second last inequality.

Inequality \eqref{lem-est1} could be proved in a similar manner. By the regularity assumption \eqref{reg-asp} and the $H^1$ stability estimate of the $L^2$ projection, we have
\begin{align}
\|\nabla P_h\nabla\partial_t p\|_{L^2}
\le
\|P_h\nabla\partial_t p\|_{H^1}
\le
C\|\nabla\partial_t p\|_{H^1}
\le
C. \label{lem-est1-1}
\end{align}
Using similar techniques in the derivation of~\eqref{lem-eq}, we get
\begin{align}
\|P_h\nabla\partial_tp-\nabla_hP_h\partial_tp\|_{L^2}\le Ch .
\end{align}
By the inverse inequalities \eqref{inverse-1}-\eqref{inverse-2}, it can be shown that
\begin{align}
&\|\nabla(\nabla_hR_h\partial_tp-P_h\nabla\partial_tp) \|_{L^2}  \nonumber\\
&\le
Ch^{-1}\|\nabla_hR_h\partial_tp-P_h\nabla\partial_tp \|_{L^2} \nonumber\\
&\le
Ch^{-1}\|\nabla_hR_h\partial_tp-\nabla_hP_h\partial_tp \|_{L^2}
+Ch^{-1}\|\nabla_hP_h\partial_tp -P_h\nabla\partial_tp \|_{L^2}
\nonumber\\
&\le
Ch^{-2}\|R_h\partial_tp - P_h\partial_tp \|_{L^2}
+Ch^{-1}h
 \nonumber\\
&\le
Ch^{-2}\|R_h\partial_tp - \partial_tp \|_{L^2}
+Ch^{-2}\|\partial_tp - P_h\partial_tp \|_{L^2} +C \nonumber\\
&\le
Ch^{-2}h^2+Ch^{-2}h^2 +C
\le
C, \label{lem-est1-2}
\end{align}
in which \eqref{Ih-est2} and \eqref{Ph-est-2} have been used again in the second to last inequality. Finally, by the triangle inequality and \eqref{lem-est1-1}-\eqref{lem-est1-2}, the estimate \eqref{lem-est1} follows immediately.
\end{proof}

\vspace{.05in}
Now we proceed with the proof of Theorem~\ref{thm-error}.

\begin{proof}[Proof of Theorem \ref{thm-error}.]
By Theorem \ref{stability}, the existence and uniqueness of numerical solution $(\H_h^n,\u_h^n,p_h^n)$, $n=2,3,\dots,N$, follows immediately since the scheme \eqref{scheme-a}-\eqref{scheme-d} is linearized and the corresponding homogeneous equations only admit zero solutions.

In the following, we present the analysis of the error equations \eqref{error-a}-\eqref{error-d} and then establish the optimal error estimates given in Theorem \ref{thm-error}. First of all, we
make the following induction assumption for the error functions at the previous time steps:
\begin{align}
\|e_\H^m\|_{L^2}+\|e_\u^m\|_{L^2} \le \tau^{\frac74}+h^{\frac{7}{4}} ,
\label{est-1}
\end{align}
for $m\le n$. Such an induction assumption will be recovered by the error estimate at the next time step $t_{n+1}$.

For $m=0,1$, \eqref{est-1} follows from Remark \ref{H1u1} immediately. The induction assumption \eqref{est-1} (for $m\le n$) yields
\begin{align}
\|\H_h^m\|_{W^{1,3}}
&\le
\|I_h\H^m\|_{W^{1,3}}+\|I_h\H^m-\Pi_h\H^m\|_{W^{1,3}} +\|e_\H^m\|_{W^{1,3}}
\nonumber\\
&\le
C\|\H^m\|_{W^{1,3}}
+Ch^{-\frac{d}{6}}\|I_h\H^m-\Pi_h\H^m\|_{H^1} +Ch^{-1-\frac{d}{6}}\|e_\H^m\|_{L^2}
\nonumber\\
&\le
C\|\H^m\|_{W^{1,3}}
+Ch^{-\frac{d}{6}}\|I_h\H^m-\H^m\|_{H^1}
+Ch^{-\frac{d}{6}}\|\H^m-\Pi_h\H^m\|_{H^1} \nonumber\\
&\quad
+Ch^{-1-\frac{d}{6}}\Big(\tau^{\frac74}+h^{\frac{7}{4}} \Big)
\nonumber\\
&\le
C\|\H^m\|_{W^{1,3}}
+Ch^{-\frac{d}{6}}h^2
+Ch^{-\frac{d}{6}}h^2
+Ch^{-1-\frac{d}{6}}\Big(\tau^{\frac74}+h^{\frac{7}{4}} \Big)
\quad(\mbox{by } \tau=\mathcal{O}(h))
\nonumber\\
&\le
K+1, \label{bound-H}  \\
\|\u_h^m\|_{L^\infty}
&\le
\|\R_h\u^m\|_{L^\infty} + \|e_\u^m\|_{L^\infty} \nonumber\\
&\le
\|\u^m\|_{W^{1,3}} + Ch^{-\frac{d}{2}}\|e_\u^m\|_{L^2}
\quad(\mbox{by \eqref{ph-infty}})
\nonumber\\
&\le
\|\u^m\|_{W^{1,3}} + Ch^{-\frac{d}{2}}\Big(\tau^{\frac74}+h^{\frac{7}{4}} \Big)
\quad(\mbox{by }\tau=\mathcal{O}(h))
\nonumber\\
&\le
K+1 , \label{bound-u}
\end{align}
for $h<h_0$, where $d=2,3,$ denotes the dimension of $\Omega$ and $h_0$ is a small positive constant. Here, $I_h$ denotes the standard Lagrange interpolation and its $W^{1,3}$ stability estimate has been used. Subsequently, we will establish the error estimate at $m=n+1$ and recover \eqref{est-1}.

{\it\underline{Step 1:} \, Estimate of \eqref{error-a}.} \,
Taking $\w_h=\widecheck e_{\H}^{n+\frac12}$ into \eqref{error-a} yields
\begin{align}
&\frac{\mu}{2\tau}\Big(\|e_{\H}^{n+1}\|_{L^2}^2 - \|e_{\H}^{n}\|_{L^2}^2\Big)
+\frac{\mu}{8\tau} \Big(\|e_{\H}^{n+1}-e_{\H}^n\|_{L^2}^2 - \|e_{\H}^{n}-e_{\H}^{n-1}\|_{L^2}^2 \Big) \nonumber\\
&\quad
+\sigma^{-1}\|\nabla\times\widecheck e_{\H}^{n+\frac12}\|_{L^2}^2
+\sigma^{-1}\|\nabla\cdot\widecheck e_{\H}^{n+\frac12}\|_{L^2}^2
\nonumber\\
&\le
\mu\bigg\{\Big(\overline\u^{n+\frac12}\times\widetilde\H^{n+\frac12} ,\nabla\times \widecheck e_{\H}^{n+\frac12} \Big)
- \Big(\overline\u_h^{n+\frac12}\times\widetilde\H_h^{n+\frac12} ,\nabla\times \widecheck e_{\H}^{n+\frac12}\Big) \bigg\}
+R_{\H}^{n+1}(\widecheck e_{\H}^{n+\frac12}) ,
\label{est-eH-1}
\end{align}
where we have used the identity
\begin{align}
\bigg(\frac{e_{\H}^{n+1}-e_{\H}^n}{\tau} , \widecheck e_{\H}^{n+\frac12} \bigg)
&=
\frac{1}{2\tau}\Big(\|e_{\H}^{n+1}\|_{L^2}^2 - \|e_{\H}^{n}\|_{L^2}^2\Big)
+\frac{1}{8\tau} \Big(\|e_{\H}^{n+1}-e_{\H}^n\|_{L^2}^2 - \|e_{\H}^{n}-e_{\H}^{n-1}\|_{L^2}^2 \Big)
\nonumber\\
&\quad
+\frac{1}{8\tau} \|e_{\H}^{n+1}-2e_{\H}^{n}+e_{\H}^{n-1}\|_{L^2}^2 .
\end{align}
By \eqref{reg-asp} and \eqref{Pih-est}, it can be shown that
\begin{align*}
R_{\H}^{n+1}(\widecheck e_{\H}^{n+\frac12})
\le
 C(\tau^2+h^{r+1})^2
+C\|\widecheck e_\H^{n+\frac12}\|_{L^2}^2
+\frac{1}{2\sigma}\|\nabla\times \widecheck e_{\H}^{n+\frac12}\|_{L^2}^2
+\frac{1}{2\sigma}\|\nabla\cdot \widecheck e_{\H}^{n+\frac12}\|_{L^2}^2 .
\end{align*}
Noticing that $\widehat\u^{n+1}:=\u^{n+1}$ and \eqref{Rh-hat2}, we obtain
\begin{align*}
&\mu\bigg\{\Big(\overline\u^{n+\frac12}\times\widetilde\H^{n+\frac12} ,\nabla\times \widecheck e_{\H}^{n+\frac12} \Big)
- \Big(\overline\u_h^{n+\frac12}\times\widetilde\H_h^{n+\frac12} ,\nabla\times \widecheck e_{\H}^{n+\frac12}\Big)  \bigg\}
\nonumber\\
&=
\mu\Big(\overline\u^{n+\frac12}\times\big(\widetilde\H^{n+\frac12} - \Pi_h\widetilde\H^{n+\frac12}\big) ,\nabla\times \widecheck e_{\H}^{n+\frac12} \Big)
+\mu\Big(\overline\u^{n+\frac12}\times \widetilde e_{\H}^{n+\frac12} ,\nabla\times \widecheck e_{\H}^{n+\frac12} \Big)
\nonumber\\
&\quad
+\mu\Big(\Big(\overline\u^{n+\frac12} - \frac{\R_h\u^{n+1}+\R_h\u^n}{2} \Big)\times\widetilde\H_h^{n+\frac12} ,\nabla\times \widecheck e_{\H}^{n+\frac12}\Big)
\nonumber\\
&\quad
+\mu\Big(\frac{R_h\u^{n+1}-\widehat{\R_h\u^{n+1}}}{2}\times\widetilde\H_h^{n+\frac12} ,\nabla\times \widecheck e_{\H}^{n+\frac12}\Big)
\nonumber\\
&\quad
+\mu\Big(\overline e_{\u}^{n+\frac12} \times\widetilde\H_h^{n+\frac12} ,\nabla\times \widecheck e_{\H}^{n+\frac12}\Big)
\nonumber\\
&\le
\mu\|\overline\u^{n+\frac12}\|_{L^\infty} \|\widetilde\H^{n+\frac12} - \Pi_h\widetilde\H^{n+\frac12}\|_{L^2}\|\nabla\times \widecheck e_{\H}^{n+\frac12} \|_{L^2}
+\mu\|\overline\u^{n+\frac12}\|_{L^\infty} \|\widetilde e_{\H}^{n+\frac12} \|_{L^2} \|\nabla\times \widecheck e_{\H}^{n+\frac12} \|_{L^2}
\nonumber\\
&\quad
+\mu\Big\|\frac{\u^{n+1}+\u^n}{2} - \frac{\R_h\u^{n+1}+\R_h\u^n}{2}\Big\|_{L^3} \|\widetilde\H_h^{n+\frac12}\|_{L^6} \|\nabla\times \widecheck e_{\H}^{n+\frac12}\|_{L^2}
\nonumber\\
&\quad
+\frac{\mu\tau}{4}\|\nabla_h(R_hp^{n+1}-R_hp^n)\|_{L^3}   \|\widetilde \H_h^{n+\frac12}\|_{L^6} \|\nabla\times \widecheck e_{\H}^{n+\frac12}\|_{L^2}
\nonumber\\
&\quad
+\mu\Big(\overline e_{\u}^{n+\frac12} \times\widetilde\H_h^{n+\frac12} ,\nabla\times \widecheck e_{\H}^{n+\frac12}\Big)
\nonumber\\
&\le
Ch^{2(r+1)}+\frac{1}{4\sigma}\|\nabla\times\widecheck e_\H^{n+\frac12}\|_{L^2}^2
+ C\|\widetilde e_\H^{n+\frac12}\|_{L^2}^2
+C\tau^4
+\mu\Big(\overline e_{\u}^{n+\frac12} \times\widetilde\H_h^{n+\frac12} ,\nabla\times \widecheck e_{\H}^{n+\frac12}\Big) ,
\end{align*}
where in the last inequality we have used the projection estimates \eqref{Ph-est1} and \eqref{Pih-est}, \eqref{bound-H}, Lemma \ref{lem} and the following inequality:
\begin{align*}
\frac{\mu\tau}{4}\|\nabla_h(R_hp^{n+1}-R_hp^n)\|_{L^3}
\le
C\tau^2 .
\end{align*}
With the above results, \eqref{est-eH-1} is reduced to
\begin{align}
&\frac{\mu}{2\tau}\Big(\|e_{\H}^{n+1}\|_{L^2}^2 - \|e_{\H}^{n}\|_{L^2}^2\Big)
+\frac{\mu}{8\tau} \Big(\|e_{\H}^{n+1}-e_{\H}^n\|_{L^2}^2 - \|e_{\H}^{n}-e_{\H}^{n-1}\|_{L^2}^2 \Big) \nonumber\\
&\quad+\frac{1}{4\sigma}\|\nabla\times\widecheck e_{\H}^{n+\frac12}\|_{L^2}^2
+\frac{1}{4\sigma}\|\nabla\cdot\widecheck e_{\H}^{n+\frac12}\|_{L^2}^2
\nonumber\\
&\le
C(\tau^2+h^{r+1})^2
+C\|\widecheck e_\H^{n+\frac12}\|_{L^2}^2
+ C\|\widetilde e_\H^{n+\frac12}\|_{L^2}^2
+\mu\Big(\overline e_{\u}^{n+\frac12} \times\widetilde\H_h^{n+\frac12} ,\nabla\times \widecheck e_{\H}^{n+\frac12}\Big) .
\label{est-eH-2}
\end{align}

{\it\underline{Step 2:} \, Estimate of \eqref{error-b}.} \,
Taking $\v_h=\overline e_\u^{n+\frac12}=\frac{1}{2}(\widehat e_\u^{n+1}+e_\u^n)$ into \eqref{error-b} leads to
\begin{align}
&
\frac{1}{2\tau}\Big(\|\widehat e_\u^{n+1}\|_{L^2}^2 -\| e_\u^n\|_{L^2}^2  \Big)
+\nu\|\nabla\overline e_\u^{n+\frac12}\|_{L^2}^2
-\big(e_p^n,\nabla\cdot\overline e_\u^{n+\frac12} \big)
\nonumber\\
&\le
\bigg(\frac{\widehat{\R_h\u^{n+1}}-\R_h\u^{n+1}}{\tau},\overline e_\u^{n+\frac12}\bigg)
+\nu\bigg(\nabla\Big(\frac{\widehat{\R_h\u^{n+1}} - \R_h\u^{n+1}}{2}\Big),\nabla\overline e_\u^{n+\frac12}\bigg)
\nonumber\\
&\quad
-\bigg(R_hp^n-\frac{R_hp^{n+1}+R_hp^n}{2},\nabla\cdot\overline e_\u^{n+\frac12}\bigg)
\nonumber\\
&\quad
-\bigg\{ b\Big(\widetilde\u^{n+\frac12}, \overline\u^{n+\frac12},\overline e_\u^{n+\frac12} \Big)
- b\Big(\widetilde\u_h^{n+\frac12}, \overline\u_h^{n+\frac12},\overline e_\u^{n+\frac12} \Big) \bigg\}
\nonumber\\
&\quad
-\mu\bigg\{  \Big(\widetilde\H^{n+\frac12}\times(\nabla\times\widecheck\H^{n+\frac12}),\overline e_\u^{n+\frac12}\Big)
-  \Big(\widetilde\H_h^{n+\frac12}\times(\nabla\times\widecheck\H_h^{n+\frac12}),\overline e_\u^{n+\frac12}\Big) \bigg\} \nonumber\\
&\quad
+R_{\u}^{n+1}(\overline e_\u^{n+\frac12})
\nonumber\\
&=:
\sum_{j=1}^6 \mathcal{I}_j .
\label{est-eu-1}
\end{align}
In the following, we estimate $\mathcal{I}_j$, $j=1,2,\dots,6$, respectively.
By using \eqref{Rh-hat}, we have
\begin{align*}
\mathcal{I}_1+\mathcal{I}_3
&=
-\bigg(\frac{R_hp^{n+1}+R_hp^n}{2} - \frac{R_hp^{n+1}+R_hp^n}{2}, \nabla\cdot \overline e_\u^{n+\frac12} \bigg)
=0.
\end{align*}
By \eqref{Rh-hat2}, $\mathcal{I}_2$ becomes
\begin{align*}
\mathcal{I}_2
&=
\frac{\nu\tau}{4}\bigg(\nabla \Big(\nabla_h(R_hp^{n+1}-R_hp^n) \Big),\nabla\overline e_\u^{n+\frac12}\bigg) \\
&\le
C\tau^2\|\nabla(\nabla_h(R_hp^{n+1}-R_hp^n))\|_{L^2}^2
+\varepsilon\|\nabla\overline e_\u^{n+\frac12}\|_{L^2}^2
\\
&\le
C\tau^4+\varepsilon\|\nabla\overline e_\u^{n+\frac12}\|_{L^2}^2 ,
\end{align*}
where we have used the second result in Lemma \ref{lem}.
By the definition of $b(\u,\v,\w)$ in \eqref{def-b}, we can rewrite $\mathcal{I}_4$ as
\begin{align*}
\mathcal{I}_4
&=
\frac12\bigg\{ \Big(\widetilde\u_h^{n+\frac12}\cdot\nabla\overline\u_h^{n+\frac12},\overline e_\u^{n+\frac12} \Big)
-\Big(\widetilde\u^{n+\frac12}\cdot\nabla\overline\u^{n+\frac12},\overline e_\u^{n+\frac12} \Big) \bigg\}
\nonumber\\
&\quad
-\frac12\bigg\{ \Big(\widetilde\u_h^{n+\frac12}\cdot\nabla\overline e_\u^{n+\frac12},\overline \u_h^{n+\frac12} \Big)
-\Big(\widetilde\u^{n+\frac12}\cdot\nabla\overline e_\u^{n+\frac12},\overline\u^{n+\frac12} \Big) \bigg\}
\nonumber\\
&=
-\frac12\bigg\{ \Big(\widetilde\u_h^{n+\frac12}\cdot\nabla\overline e_\u^{n+\frac12},\overline e_\u^{n+\frac12} \Big)
+\Big(\widetilde\u_h^{n+\frac12}\cdot\nabla(\overline\u^{n+\frac12}-\overline{\R_h\u}^{n+\frac12}),\overline e_\u^{n+\frac12} \Big)
\nonumber\\
&\quad\quad\quad
+\Big(\widetilde e_\u^{n+\frac12}\cdot\nabla\overline\u^{n+\frac12}, \overline e_\u^{n+\frac12}  \Big)
+\Big((\widetilde\u^{n+\frac12}-\widetilde{\R_h\u}^{n+\frac12})\cdot\nabla\overline\u^{n+\frac12},\overline e_\u^{n+\frac12}  \Big)\bigg\}
\nonumber\\
&\quad
+\frac12\bigg\{ \Big(\widetilde\u_h^{n+\frac12}\cdot\nabla\overline e_\u^{n+\frac12},\overline e_\u^{n+\frac12} \Big)
+\Big(\widetilde\u_h^{n+\frac12}\cdot\nabla \overline e_\u^{n+\frac12},\overline\u^{n+\frac12}-\overline{\R_h\u}^{n+\frac12} \Big)
\nonumber\\
&\quad\quad\quad
+\Big(\widetilde e_\u^{n+\frac12}\cdot\nabla \overline e_\u^{n+\frac12}  ,\overline\u^{n+\frac12}\Big)
+\Big((\widetilde\u^{n+\frac12}-\widetilde{\R_h\u}^{n+\frac12})\cdot\nabla\overline e_\u^{n+\frac12} ,\overline\u^{n+\frac12} \Big)\bigg\}
\nonumber\\
&=:
\frac12\sum_{k=1}^8\mathcal{I}_{4,k}.
\end{align*}
In the estimate of $\mathcal{I}_4$, the most difficult processing is the control of $\mathcal{I}_{4,2}$, for which an application of integration by parts implies that
\begin{align*}
\mathcal{I}_{4,2}
&=
\Big((\nabla\cdot\widetilde\u_h^{n+\frac12})(\overline\u^{n+\frac12}-\overline{\R_h\u}^{n+\frac12}),\overline e_\u^{n+\frac12} \Big)
+\Big(\widetilde\u_h^{n+\frac12}\cdot\nabla\overline e_\u^{n+\frac12},\overline\u^{n+\frac12}-\overline{\R_h\u}^{n+\frac12} \Big) \\
&=
\Big((\nabla\cdot\widetilde{\R_h\u}^{n+\frac12})(\overline\u^{n+\frac12}-\overline{\R_h\u}^{n+\frac12}),\overline e_\u^{n+\frac12} \Big)
-\Big((\nabla\cdot\widetilde e_\u^{n+\frac12})(\overline\u^{n+\frac12}-\overline{\R_h\u}^{n+\frac12}),\overline e_\u^{n+\frac12} \Big) \\
&\quad
+\Big(\widetilde\u_h^{n+\frac12}\cdot\nabla\overline e_\u^{n+\frac12},\overline\u^{n+\frac12}-\overline{\R_h\u}^{n+\frac12} \Big) \\
&\le
\|\nabla\cdot\widetilde{\R_h\u}^{n+\frac12}\|_{L^3} \|\overline\u^{n+\frac12}-\overline{\R_h\u}^{n+\frac12}\|_{L^2} \|\overline e_\u^{n+\frac12} \|_{L^6}  \\
&\quad
+\|\nabla\cdot\widetilde e_\u^{n+\frac12}\|_{L^3}  \|\overline\u^{n+\frac12}-\overline{\R_h\u}^{n+\frac12}\|_{L^2} \|\overline e_\u^{n+\frac12} \|_{L^6}  \\
&\quad
+\|\widetilde\u_h^{n+\frac12}\|_{L^\infty} \|\nabla\overline e_\u^{n+\frac12}\|_{L^2} \|\overline\u^{n+\frac12}-\overline{\R_h\u}^{n+\frac12} \|_{L^2} \\
&\le
C(\tau^2+h^{r+1})^2 +\varepsilon\|\nabla\overline e_\u^{n+\frac12}\|_{L^2}^2
+C\|\widetilde e_\u^{n+\frac12}\|_{L^2}^2,
\end{align*}
where we have used \eqref{ph-infty}, \eqref{Rh-hat2}, \eqref{bound-u},
\begin{align*}
\|\overline\u^{n+\frac12}-\overline{\R_h\u}^{n+\frac12}\|_{L^2}
&=
\bigg\|\frac{ \u^{n+1}+\u^n}{2}-\frac{ \widehat{\R_h\u^{n+1}}+\R_h\u^n}{2}
\bigg\|_{L^2} \quad (\mbox{here use }\widehat \u^{n+1}=\u^{n+1})
\nonumber\\
&\le
\bigg\|\frac{ \u^{n+1}+\u^n}{2}-\frac{ \R_h\u^{n+1}+\R_h\u^n}{2}
\bigg\|_{L^2}
+\bigg\|\frac{ \R_h\u^{n+1}-\widehat{\R_h\u^{n+1}}}{2}
\bigg\|_{L^2}
\nonumber\\
&\le
Ch^{r+1}+\frac{\tau}{4}  \|\nabla_h(R_hp^{n+1}-R_hp^n)\|_{L^2}
\quad (\mbox{by \eqref{lem-est0}})
\nonumber\\
&\le
Ch^{r+1}+C\tau^2
\end{align*}
and
\begin{align*}
\|\nabla\cdot\widetilde e_\u^{n+\frac12}\|_{L^3}  \|\overline\u^{n+\frac12}-\overline{\R_h\u}^{n+\frac12}\|_{L^2}
&\le
Ch^{-1-\frac{d}{6}}\|\widetilde e_\u^{n+\frac12}\|_{L^2}  \|\overline\u^{n+\frac12}-\overline{\R_h\u}^{n+\frac12}\|_{L^2} \\
&\le
Ch^{-1-\frac{d}{6}}(h^{r+1}+\tau^2) \|\widetilde e_\u^{n+\frac12}\|_{L^2}
\quad (\mbox{by }\tau=\mathcal{O}(h))  \\
&\le
C \|\widetilde e_\u^{n+\frac12}\|_{L^2}  .
\end{align*}
The estimate for other terms of $\mathcal{I}_4$ is straightforward. Clearly, $\mathcal{I}_{4,1}$ and $\mathcal{I}_{4,5}$ are cancelled.
By \eqref{Ph-est1} and \eqref{bound-u}, we obtain
\begin{align*}
\mathcal{I}_{4,3}+\mathcal{I}_{4,4}+\sum_{k=6}^8\mathcal{I}_{4,k}
&\le
C\|\widetilde e_\u^{n+\frac12}\|_{L^2}^2
+\varepsilon\|\nabla\overline e_\u^{n+\frac12}\|_{L^2}^2
+Ch^{2(r+1)} +C\tau^4 .
\end{align*}
Above all, we are led to
\begin{align*}
\mathcal{I}_4\le
C\|\widetilde e_\u^{n+\frac12}\|_{L^2}^2
+\varepsilon\|\nabla\overline e_\u^{n+\frac12}\|_{L^2}^2
+Ch^{2(r+1)} +C\tau^4.
\end{align*}
Similarly, we can rewrite $\mathcal{I}_5$ as
\begin{align*}
\mathcal{I}_5
&=
-\mu\bigg\{
\Big((\widetilde\H^{n+\frac12}-\Pi_h\widetilde\H^{n+\frac12})\times(\nabla\times\widecheck\H^{n+\frac12}) ,\overline e_\u^{n+\frac12} \Big)
+ \Big(\widetilde e_\H^{n+\frac12}\times(\nabla\times\widecheck\H^{n+\frac12}) ,\overline e_\u^{n+\frac12} \Big)
\nonumber\\
&\quad\quad\
+ \Big( \widetilde\H_h^{n+\frac12}\times(\nabla\times(\widecheck\H^{n+\frac12}-\Pi_h\widecheck\H^{n+\frac12})) ,\overline e_\u^{n+\frac12} \Big)
+ \Big( \widetilde\H_h^{n+\frac12}\times(\nabla\times\widecheck e_\H^{n+\frac12} ) ,\overline e_\u^{n+\frac12} \Big)
\bigg\}
\nonumber\\
&=:
\sum_{k=1}^4\mathcal{I}_{5,k}.
\end{align*}
By \eqref{Pih-est}, we have
\begin{align*}
\mathcal{I}_{5,1}
+\mathcal{I}_{5,2}
&\le
\mu\|\widetilde\H^{n+\frac12}-\Pi_h\widetilde\H^{n+\frac12}\|_{L^2} \|\nabla\times\widecheck\H^{n+\frac12}\|_{L^3} \|\overline e_\u^{n+\frac12}\|_{L^6}
\nonumber\\
&\quad
+\mu\|\widetilde e_\H^{n+\frac12}\|_{L^2}\|\nabla\times\widecheck\H^{n+\frac12}\|_{L^3}\|\overline e_\u^{n+\frac12} \|_{L^6}
\nonumber\\
&\le
Ch^{2(r+1)}+\varepsilon\|\nabla\overline e_\u^{n+\frac12}\|_{L^2}^2+C\|\widetilde e_\H^{n+\frac12}\|_{L^2}^2 .
\end{align*}
With an application of integration by parts, $\mathcal{I}_{5,3}$ becomes
\begin{align*}
\mathcal{I}_{5,3}
&=
\Big(\overline e_\u^{n+\frac12}\times\widetilde\H_h^{n+\frac12} ,\nabla\times(\widecheck\H^{n+\frac12}-\Pi_h\widecheck\H^{n+\frac12}) \Big)
\nonumber\\
&=
\Big(\nabla\times(\overline e_\u^{n+\frac12}\times\widetilde\H_h^{n+\frac12} ), \widecheck\H^{n+\frac12}-\Pi_h\widecheck\H^{n+\frac12} \Big) \\
&\le
\big\|\nabla\times(\overline e_\u^{n+\frac12}\times\widetilde\H_h^{n+\frac12} )\big\|_{L^2}\big\|\widecheck\H^{n+\frac12}-\Pi_h\widecheck\H^{n+\frac12}\|_{L^2} \\
&\le
\varepsilon\|\nabla\overline e_\u^{n+\frac12}\|_{L^2}^2+Ch^{2(r+1)} ,
\end{align*}
where we have used \eqref{bound-H} and
\begin{align*}
&\big\|\nabla\times\big(\overline e_\u^{n+\frac12}\times\widetilde\H_h^{n+\frac12} \big)\big\|_{L^2}
\nonumber\\
&=
\big\|\big(\nabla\cdot\widetilde\H_h^{n+\frac12}\big)\overline e_\u^{n+\frac12}
-\big(\nabla\cdot\overline e_\u^{n+\frac12}\big) \widetilde\H_h^{n+\frac12}
+\big(\widetilde\H_h^{n+\frac12}\cdot\nabla\big)\overline e_\u^{n+\frac12}
-\big(\overline e_\u^{n+\frac12} \cdot\nabla\big)\widetilde\H_h^{n+\frac12} \big\|_{L^2}
\nonumber\\
&\le
C\|\widetilde\H_h^{n+\frac12}\|_{W^{1,3}}\|\overline e_\u^{n+\frac12}\|_{L^6}
+C\|\nabla\overline e_\u^{n+\frac12}\|_{L^2}\|\widetilde\H_h^{n+\frac12}\|_{L^\infty} .
\end{align*}
Therefore, the following bound is available for $\mathcal{I}_{5}$:
\begin{align*}
\mathcal{I}_5
&\le
Ch^{2(r+1)}+\varepsilon\|\nabla\overline e_\u^{n+\frac12}\|_{L^2}^2+C\|\widetilde e_\H^{n+\frac12}\|_{L^2}^2
-\mu\Big(\widetilde\H_h^{n+\frac12}\times(\nabla\times\widecheck e_\H^{n+\frac12}) ,\overline e_\u^{n+\frac12} \Big) .
\end{align*}
A bound for the truncation error term $\mathcal{I}_6$ is based on \eqref{Ph-est1} and the regularity assumptions \eqref{reg-asp}:
\begin{align*}
\mathcal{I}_6
&\le
C(\tau^2+h^{r+1})^2
+\varepsilon\|\nabla\overline e_\u^{n+\frac12}\|_{L^2}^2.
\end{align*}
With the above estimates, we obtain the following result from \eqref{est-eu-1}
\begin{align}
&\frac{1}{2\tau}\Big(\|\widehat e_\u^{n+1}\|_{L^2}^2 -\| e_\u^n\|_{L^2}^2 \Big)
+\frac{\nu}{2}\|\nabla\overline e_\u^{n+\frac12}\|_{L^2}^2
-\big(e_p^n,\nabla\cdot\overline e_\u^{n+\frac12} \big)
\nonumber\\
&\le
C\|\widetilde e_\u^{n+\frac12}\|_{L^2}^2
+C(\tau^2+h^{r+1})^2
+C\|\widetilde e_\H^{n+\frac12}\|_{L^2}^2
-\mu\Big(\widetilde\H_h^{n+\frac12}\times(\nabla\times\widecheck e_\H^{n+\frac12}) ,\overline e_\u^{n+\frac12} \Big) .
\label{est-eu-2}
\end{align}

{\it\underline{Step 3:} \, Estimate of the term $-(e_p^n,\nabla\cdot\overline e_\u^{n+\frac12})$ in \eqref{est-eu-2}.} \,
We rewrite \eqref{error-c} as
\begin{align}
\frac{e_\u^{n+1}-\widehat e_\u^{n+1}}{\tau}
=-\frac{1}{2}\nabla_h(e_p^{n+1}-e_p^n) .
\label{error-d-2}
\end{align}
With the above equality and the fact that $\overline e_\u^{n+\frac12}=\frac12(\widehat e_\u^{n+1}+e_\u^n)$, we have
\begin{align}
-\big(e_p^n,\nabla\cdot\overline e_\u^{n+\frac12} \big)
&=
-\frac12\big(e_p^n,\nabla\cdot\widehat e_\u^{n+1} \big)
\quad (\mbox{by \eqref{error-d}})
\nonumber\\
&=
\frac12\big(\nabla_he_p^n, \widehat e_\u^{n+1} \big)
\nonumber\\
&=
\frac{\tau}{4} \big(\nabla_he_p^n ,\nabla_h(e_p^{n+1}-e_p^n) \big)
\quad (\mbox{by \eqref{error-d-2}})
\nonumber\\
&=
\frac{\tau}{8}\Big(\|\nabla_he_p^{n+1}\|_{L^2}^2 - \|\nabla_he_p^{n}\|_{L^2}^2 \Big)
-\frac{\tau}{8}\|\nabla_h(e_p^{n+1}-e_p^n)\|_{L^2}^2
\nonumber\\
&=
\frac{\tau}{8}\Big(\|\nabla_he_p^{n+1}\|_{L^2}^2 - \|\nabla_he_p^{n}\|_{L^2}^2 \Big)
-\frac{1}{2\tau}\|e_\u^{n+1}-\widehat e_\u^{n+1}\|_{L^2}^2 .
\label{est-ep-1}
\end{align}

{\it\underline{Step 4:}} \,
By taking $\l_h=e_\u^{n+1}$ into \eqref{error-c}, we arrive at
\begin{align}
\frac{1}{2\tau}\Big(\|e_\u^{n+1}\|_{L^2}^2 - \|\widehat e_\u^{n+1}\|_{L^2}^2
+\|e_\u^{n+1}-\widehat e_\u^{n+1}\|_{L^2}^2 \Big)
=0,
\label{est-eu-3}
\end{align}
in which \eqref{error-d} has been applied.

{\it\underline{Step 5:}}  \, A summation of \eqref{est-eH-2}, \eqref{est-eu-2}, \eqref{est-ep-1}, and \eqref{est-eu-3} leads to
\begin{align}
&\frac{\mu}{2\tau}\Big(\|e_{\H}^{n+1}\|_{L^2}^2 - \|e_{\H}^{n}\|_{L^2}^2\Big)
+\frac{\mu}{8\tau} \Big(\|e_{\H}^{n+1}-e_{\H}^n\|_{L^2}^2 - \|e_{\H}^{n}-e_{\H}^{n-1}\|_{L^2}^2 \Big) \nonumber\\
&\quad
+\frac{1}{4\sigma}\|\nabla\times\widecheck e_{\H}^{n+\frac12}\|_{L^2}^2
+\frac{1}{4\sigma}\|\nabla\cdot\widecheck e_{\H}^{n+\frac12}\|_{L^2}^2
\nonumber\\
&\quad
+\frac{1}{2\tau}\Big(\|e_\u^{n+1}\|_{L^2}^2-\|e_\u^n\|_{L^2}^2 \Big)
+\frac{\nu}{2}\|\nabla\overline e_\u^{n+\frac12}\|_{L^2}^2
+\frac{\tau}{8}\Big(\|\nabla_h e_p^{n+1}\|_{L^2}^2 - \|\nabla_h e_p^{n}\|_{L^2}^2 \Big)
\nonumber\\
&\le
C\|\widecheck e_\H^{n+\frac12}\|_{L^2}^2
+C\|\widetilde e_\H^{n+\frac12}\|_{L^2}^2
+C\|\widetilde e_\u^{n+\frac12}\|_{L^2}^2
+C(\tau^2+h^{r+1})^2 .
\end{align}
An application of discrete Gronwall's inequality indicates that there exists a positive constant $\tau_0$ such that
\begin{align}\label{est-final}
&\|e_\H^{n+1}\|_{L^2}^2 + \tau\sum_{m=1}^n\|\nabla\times\widecheck e_\H^{m+\frac12}\|_{L^2}^2
+ \tau\sum_{m=1}^n\|\nabla\cdot\widecheck e_\H^{m+\frac12}\|_{L^2}^2
\nonumber\\
&\quad
+\|e_\u^{n+1}\|_{L^2}^2 + \tau\sum_{m=1}^n\|\nabla\overline e_\u^{m+\frac12}\|_{L^2}^2
+\tau^2\|\nabla_h e_p^{n+1}\|_{L^2}^2
\le
C(\tau^2+h^{r+1})^2 ,
\end{align}
if $\tau<\tau_0$. By applying the Cauchy's inequality
\begin{align}
\|\nabla\times\widecheck e_\H^{n+\frac12}\|_{L^2}^2
&\ge
\frac38\|\nabla\times e_\H^{n+1}\|_{L^2}^2 - \frac18\|\nabla\times e_\H^{n-1}\|_{L^2}^2 , \\
\|\nabla\cdot\widecheck e_\H^{n+\frac12}\|_{L^2}^2
&\ge
\frac38\|\nabla\cdot e_\H^{n+1}\|_{L^2}^2 - \frac18\|\nabla\cdot e_\H^{n-1}\|_{L^2}^2 ,
\end{align}
we further get the following result from \eqref{est-final}
\begin{align}
&\|e_\H^{n+1}\|_{L^2}^2 + \tau\sum_{m=1}^n\|\nabla\times e_\H^{m+1}\|_{L^2}^2
+\tau\sum_{m=1}^n\|\nabla\cdot e_\H^{m+1}\|_{L^2}^2
\nonumber\\
&\quad
+\|e_\u^{n+1}\|_{L^2}^2 + \tau\sum_{m=1}^n\|\nabla\overline e_\u^{m+\frac12}\|_{L^2}^2
+\tau^2\|\nabla_h e_p^{n+1}\|_{L^2}^2
\le
C(\tau^2+h^{r+1})^2 .
\end{align}
The above estimate implies that the induction assumption \eqref{est-1} could be recovered at $m=n+1$, if $\tau$ and $h$ are sufficiently small.
Thus the mathematical induction is closed. By the projection estimates \eqref{Ph-est1} and \eqref{Pih-est}, the error estimates \eqref{est-error}-\eqref{est-error2} in Theorem \ref{thm-error} follow immediately.
\end{proof}

%
%
%

\section{Numerical examples}\label{sec:numerical results}
\setcounter{equation}{0}

In this section, we present several numerical examples to illustrate our theoretical results in Theorems \ref{thm-error} and \ref{stability}.
For the sake of simplicity, numerical results are tested for two-dimensional problems in a unit square domain.

\begin{example}
\rm
First, we consider the MHD equations
\begin{align}
&    \mu\partial_t\H+\sigma^{-1}\nabla\times(\nabla\times\H)
        -\mu\nabla\times(\u\times\H)={\bm g},
        \label{PDEa-2}\\
&
    \partial_t\u+\u\cdot\nabla\u-\nu\Delta\u+\nabla p
        =\f-\mu\H\times(\nabla\times\H),
        \label{PDEb-2}\\
&
    \nabla\cdot\H=0,\quad \nabla\cdot\u=0,\label{PDEc-2}
\end{align}
in $\Omega=[0,1]\times[0,1]$, with the initial and boundary conditions \eqref{initial data-1}-\eqref{BC-1}, where the source terms ${\bm g}$ and $\f$ are chosen correspondingly to the exact solutions
\begin{equation} \label{exactsolution}
  \begin{aligned}
    \u&=t^4\left(
    \begin{aligned}
      \sin^2(\pi x)\sin(2\pi y) \\
      -\sin(2\pi x)\sin^2(\pi y)
    \end{aligned}
    \right),  \\
     \H&=t^4\left(
    \begin{aligned}
      -\sin(2\pi y)\cos(2\pi x) \\
      \sin(2\pi x)\cos(2\pi y)
    \end{aligned}
    \right), \\
    p&=t^4\sin(2\pi x)\sin(2\pi y).
  \end{aligned}
\end{equation}
For simplicity, all the coefficients $\nu, \sigma, \mu$ in~\eqref{PDEa-2}-\eqref{PDEc-2} are chosen to be $1$, and we take the final time as $T=1$. Note that the above exact solutions $\u$ and $\H$ satisfy the divergence-free conditions.

We solve the MHD system \eqref{PDEa-2}-\eqref{PDEc-2} by the modified Crank--Nicolson FEM scheme \eqref{scheme-a}-\eqref{scheme-d} with a quadratic finite element approximation for
$\H$ and $\u$, and a linear finite element approximation for $p$.
To investigate the convergence rate in time, we first choose $\tau=T/N$ with $N=10,20,40,80$, with a sufficiently small spatial mesh size $h=1/100$ such that the spatial discretization error can be relatively negligible.
We present the numerical results at time $T=1$ in Table \ref{table1}, which indicate that the proposed scheme has second-order convergence in time.
\begin{table}[htb]
\begin{center}
\begin{tabular}{c|c|c|c|c}
\hline
\hline
   $\tau$ &$\|\u^N-\u_h^N\|_{L^2 }$  &Order &$\|\H^N-\H_h^N\|_{L^2 }$  &Order \\ \hline
$1/10$   &$9.636\times10^{-3}$  &         &$2.885\times10^{-2}$  &  \\ \hline
$1/20$   &$2.390\times10^{-3}$  &2.01  &$7.504\times10^{-3}$  &1.94   \\ \hline
$1/40$   &$5.741\times10^{-4}$  &2.06  &$1.912\times10^{-4}$  &1.97   \\ \hline
$1/80$   &$1.409\times10^{-4}$  &2.03  &$4.823\times10^{-4}$  &1.99   \\ \hline
\hline
\end{tabular}
\vspace{.1in}
\caption{Temporal convergence at $T=1$}
\label{table1}
\end{center}
\end{table}

Then we solve the problem \eqref{PDEa-2}-\eqref{PDEc-2} by the modified Crank--Nicolson FEM scheme \eqref{scheme-a}-\eqref{scheme-d} with a sufficiently small temporal step size $\tau=1/2000$, to focus on the spatial convergence rate. Again, a quadratic finite element approximation for $\H$ and $\u$ is applied, combined with a linear finite element approximation for $p$. Here, we take $h=1/10,1/20,1/40,1/80$. Numerical results at $T=1$ are presented in Table \ref{tableh}. It is observed that the errors in $L^2$-norm are proportional to $h^3$, which are consistent with the theoretical analysis in Theorem \ref{thm-error}.
\begin{table}[htb]
\begin{center}
\begin{tabular}{c|c|c|c|c}
\hline
\hline
    $h$    &$\|\u^N-\u_h^N\|_{L^2 }$  &Order &$\|\H^N-\H_h^N\|_{L^2 }$  &Order  \\ \hline
$1/10$   &$1.510\times10^{-3}$  &         &$2.723\times10^{-3}$  &  \\ \hline
$1/20$   &$1.906\times10^{-4}$  &2.99  &$3.433\times10^{-4}$  &2.99   \\ \hline
$1/40$   &$2.392\times10^{-5}$  &2.99  &$4.313\times10^{-5}$  &2.99   \\ \hline
$1/80$   &$3.008\times10^{-6}$  &2.99  &$5.480\times10^{-6}$  &2.98  \\ \hline
\hline
\end{tabular}
\vspace{.1in}
\caption{Spatial convergence at $T=1$}
\label{tableh}
\end{center}
\end{table}

\end{example}

\begin{example}
\rm
Second, we test the energy stability of the proposed scheme by solving the problem \eqref{PDEa}-\eqref{BC-1} in $\Omega=[0,1]\times[0,1]$ with $\J=\f={\bm 0}$ ($\J$ denotes a scalar function in $\mathbb{R}^2$) and $T=1$. Here, all the coefficients $\nu, \sigma, \mu$ in \eqref{PDEa}-\eqref{BC-1} are chosen to be $1$ and the initial values are chosen as:
\begin{equation} \label{initial-date-2}
  \begin{aligned}
    \u_0&=\left(
    \begin{aligned}
      \sin^2(\pi x)\sin(2\pi y) \\
      -\sin(2\pi x)\sin^2(\pi y)
    \end{aligned}
    \right),  \\
     \H_0&=\left(
    \begin{aligned}
      -\sin(2\pi y)\cos(2\pi x) \\
      \sin(2\pi x)\cos(2\pi y)
    \end{aligned}
    \right), \\
    p_0&=\sin(2\pi x)\sin(2\pi y).
  \end{aligned}
\end{equation}

We solve the problem by the proposed scheme \eqref{scheme-a}-\eqref{scheme-d} with a quadratic finite element approximation for $\H$ and $\u$, combined with a linear finite element approximation for $p$. The time step size and spatial mesh size are chosen as $\tau=10$ and $h=1/50$, respectively. We define the energy function as $E_h^n:=\|\u_h^n\|_{L^2}^2+\|\H_h^n\|_{L^2}^2+\frac14\|\H_h^n-\H_h^{n-1}\|_{L^2}^2+\frac{\tau^2}{4}\|\nabla_hp_h^n\|_{L^2}^2$. The energy evolution curve, up to the final time $T=1000$, is displayed in Figure~\ref{fig2}, which clearly indicates the energy dissipation property, consistent with the theoretical result in Theorem \ref{stability}.
\begin{figure}[htpb]
  \centering
  \includegraphics[width=3.5in]{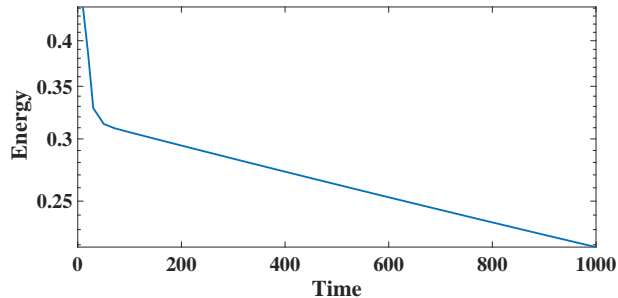}\\
  \caption{Energy of the MHD system at each time level}
\label{fig2}
\end{figure}

\end{example}

\section{Conclusion}   \label{sec:conclusion}

In this paper, we propose a decoupled and temporally second-order accurate, finite element method for the incompressible magnetohydrodynamic equations~\eqref{PDEa}-\eqref{PDEc}.
The primary difficulties are associated with the nonlinear and coupled nature of the problem.
In this work, a modified Crank--Nicolson method is used for the temporal discretization, and appropriate semi-implicit treatments are adopted for the approximation of the fluid convection term and two coupled terms. Then a linear system with variable coefficients is presented and its unique solvability is theoretically proved by the fact that the corresponding homogeneous equations only admit zero solutions. One prominent advantage of the scheme is associated with a decoupling approach in the Stokes solver, which computes an intermediate velocity field based on the pressure gradient at the previous time step, and then enforces the incompressibility constraint via the Helmholtz decomposition of the intermediate velocity field. As a result, this decoupling approach greatly reduces the computation of the MHD system. Furthermore, the energy stability analysis and optimal error estimates in the discrete $L^\infty(0,T;L^2)$ norm are provided for the scheme, in which the decoupled Stokes solver needs to be carefully estimated. Several numerical examples are presented to demonstrate the robustness and accuracy of the proposed scheme. The extension of the energy stable projection methods and its optimal-order error estimates to two-phase MHD models will be investigated in the future.



\vspace{.05in}
{\bf Funding.}
The research of C.~Wang was supported in part by NSF DMS-2012669.
The research of J.~Wang was supported in part by NSFC-U1930402 and NSFC-12071020.
The research of Z.~Xia was supported in part by NSFC-11871139.
The research of L.~Xu was supported in part by NSFC-11771068 and NSFC-12071060.

	\bibliographystyle{plain}
	\bibliography{MHD}




\end{document}